\theoremstyle{plain}
\newtheorem{theorem}{Theorem}[section]
\newtheorem{lemma}[theorem]{Lemma}
\theoremstyle{remark}
\numberwithin{equation}{section}
\def\tht{\theta}
\def\Om{\Omega}
\def\e{\varepsilon}
\def\g{\gamma}
\def\G{\Gamma}
\def\l{\lambda}
\def\p{\partial}
\def\D{\Delta}
\def\E{\mbox{\rm e}}
\def\a{\alpha}
\def\b{\beta}
\def\L{\Lambda}
\def\Odr{\mathcal{O}}
\def\H{W_2}
\def\Ho{\mathring{W}_2}
\def\ga{\mathring{\g}}
\def\di{\,d}
\def\I{\mathrm{I}}
\def\Op{\mathcal{H}}
\def\fh{\mathfrak{h}}
\def\cc#1{\mathcal{#1}}
\def\ph{\widehat{\phi}}
\DeclareMathOperator{\RE}{Re}
 \DeclareMathOperator{\spec}{\sigma}
\DeclareMathOperator{\discspec}{\sigma_{d}}
\DeclareMathOperator{\essspec}{\sigma_{e}}
\newcounter{assumption}
\begin{document}
\allowdisplaybreaks

\title{\textbf{Planar waveguide with ``twisted'' boundary conditions: discrete spectrum}}
\author{Denis Borisov\,$^a$, Giuseppe Cardone$^b$}
\date{\small
\begin{center}
\begin{quote}
\begin{enumerate}
{\it
\item[$a)$]
Bashkir State Pedagogical University,
October St.~3a, 450000 Ufa,
Russian Federation; \texttt{borisovdi@yandex.ru}
\item[$b)$]
University of Sannio,
Department of Engineering, Corso Garibaldi,
107, 82100 Benevento, Italy; \texttt{giuseppe.cardone@unisannio.it}
}
\end{enumerate}
\end{quote}
\end{center}
%
%
}
\maketitle

\begin{abstract}
We consider a planar waveguide with combined Dirichlet and Neumann conditions imposed in a ``twisted'' way. We study the discrete spectrum and describe it dependence on the configuration of the boundary conditions. In particular, we show that in certain cases the model can have discrete eigenvalues emerging from the threshold of the essential spectrum. We give a criterium for their existence and construct them as convergent holomorphic series.
\end{abstract}
%

\section{Introduction}

During last three decades the models of quantum waveguides attract a lot of attention of both physicists and mathematicians. The waveguides are usually modeled by infinite planar strips and multidimensional cylinders or layers. In such domains elliptic operators with Dirichlet condition are considered. Much efforts were concentrated on understanding spectral properties of quantum waveguides with various perturbations. As the examples of possible perturbations we mention local deformation of the boundary \cite{BEGK}, \cite{BGRS}, \cite{BR}, \cite{EV3}, perturbation by a potential \cite{DE} or by a second order differential operator \cite{GRU2}, adding a magnetic field \cite{BorEkhKov}, \cite{EK}, bending \cite{CDFK}, \cite{BZ}, \cite{DE}, \cite{EV3}, \cite{DES}, \cite{ES}, \cite{EFK}, \cite{GRU1} or twisting \cite{BKRS}, \cite{EKK}, \cite{EK2}, \cite{GRU1}, \cite{KS} the waveguides, see also the references in the cited papers. Waveguides with general abstract perturbation of the operator were considered in \cite{GRR04}.  One more type of the perturbation is changing the type of the boundary condition on the part of the boundary. Quite a popular model of this kind was a waveguide with a finite Neumann part on the boundary \cite{BorExGad02}, \cite{BEJPA04}, \cite{BEJMP06}, \cite{BGRS}, \cite{EV1}, \cite{GRR04}. A more general model is two waveguides having a common boundary where a gap is cut out \cite{Bor-MSb06}, \cite{BJPA07}, \cite{EV2}, \cite{E}, \cite{HTWKHKP}, \cite{K}, \cite{P}. In the cited paper the Neumann segment or a gap on the boundary were referred to as ``window(s)''. The waveguide with a magnetic field and a window was considered in \cite{BorEkhKov}. In all cited papers the authors  studied the dependence of the discrete eigenvalues on the window(s). The conditions for the existence and absence of the discrete spectrum were established. It was found that  generally the presence of the windows generates discrete eigenvalues below the essential spectrum. This phenomenon was studied in details, see \cite{BGRS}, \cite{DK}, \cite{EV1}, \cite{EV2}  and other papers. The most full and completed results are contained in \cite{BorExGad02}, \cite{Bor-MSb06}, \cite{BJPA07}, \cite{GRR04}.

A continuation of aforementioned papers on the waveguides with windows are the works where the measure of the windows is infinite. In \cite{BCJPA09}, \cite{BBCCRM11}, \cite{BBCAHP11}, \cite{BBCJMS111} the waveguides with an infinite number of windows were considered. The windows were modeled by a Neumann boundary condition on an infinite periodic set of small closely spaced segments. This is an example of perturbation from homogenization theory being studied quite well in the case of bounded domains. The main results of \cite{BCJPA09}, \cite{BBCCRM11}, \cite{BBCAHP11}, \cite{BBCJMS111} are the uniform resolvent convergence to homogenized operator and description of the asymptotic behavior of the spectrum. The next example of the waveguide with infinite Neumann part of boundary is contained in \cite{DK2}, \cite{KK}. Here a planar bent waveguide was considered with Dirichlet condition on one side and with Neumann condition on the other. It was found that the subject to the direction of bending such model possesses or does not discrete eigenvalue below the essential spectrum.  The third and the most important for us example is in the paper \cite{DK}. Here the waveguide was modeled by the Laplacian with combined Dirichlet and Neumann condition imposed in a ``twisted'' way, see figure~\ref{pic1}.  For all value of $\ell$ the domain of such operator was explicitly described. It was also shown that there exists a number $\ell_1$ such that for $\ell\leqslant \ell_1$ the discrete spectrum of the system is empty while for $\ell>\ell_1$ it is not.

\begin{figure}
\begin{center}\includegraphics[scale=0.6]{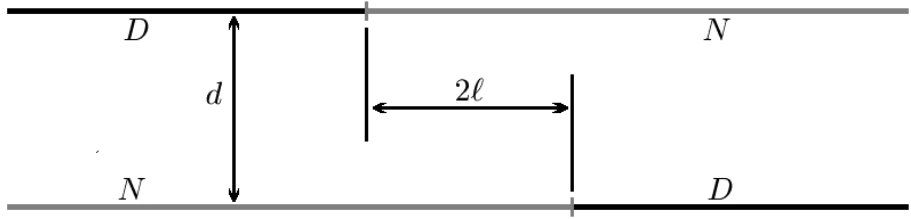}

\caption{Waveguide with combined boundary conditions} \label{pic1}\end{center}
\end{figure}

In this paper we study the above described model from  \cite{DK}, i.e., we consider the waveguide with combined Dirichlet and Neumann condition as shown on figure~\ref{pic1}. We study the dependence of the discrete spectrum on the parameter $\ell$. The results can be splitted into two groups. The first of them is formulated in Theorem~\ref{th2.2} below and consists of the statements describing general structure of the discrete spectrum. Namely, we prove that there exists an infinite set of critical values of $\ell$ such that, passing each of these, critical value creates one more discrete eigenvalue. We obtain two-sided estimates for all the discrete eigenvalues as well as two-sided estimates for the critical values of $\ell$. We also show that the eigenfunctions of the system have certain parity under the symmetry transformation w.r.t. to the center of the waveguide.

The second group of the results is formulated in Theorem~\ref{th2.3} and describes the structure of the spectrum as $\ell$ is close to a critical value $\ell_*$. We prove a criterium for a given value $\ell$ to be critical. Then we show that an additional aforementioned discrete eigenvalue emerges from the threshold of the essential spectrum. We calculate the complete asymptotic expansion for the emerging eigenvalue and provide a recurrent procedure allowing to determine all the coefficient of this expansion. The expansion is constructed in terms of a small parameter $\e:=\ell-\ell_*$. Moreover, we prove that the asymptotics converges to the eigenvalue and the sum is a holomorphic function w.r.t. $\e$. In other words, our asymptotic procedure allows one to determine the eigenvalue \emph{exactly}. The described results are similar by its nature to those in \cite{BorExGad02}, \cite{Bor-MSb06}, where the quantum waveguide with one finite window was considered. At the same, there is one important difference. In our case we succeeded to prove the holomorphy w.r.t. $\e$ of the emerging eigenvalue and we also suggest a recurrent procedure for determining the coefficients of its Taylor series. This result is in a big contrast with those in \cite{BorExGad02}, \cite{Bor-MSb06}, since in the cited papers only the leading terms of the asymptotic expansion for the emerging eigenvalues were constructed.

Let us mention certain aspects of the techniques used in this paper. The first group of the results is proved mostly by  Dirichlet-Neumann bracketing and it is quite standard approach for proving such kind of results. The second group is established employing the combination of two techniques. The first is the analytic continuation of the resolvent in a vicinity of the threshold of the essential spectrum. The other is a nonsymmetric generalization of Birman-Schwinger principle suggested in \cite{G02}. Such a combination has already been used in \cite{BorExGad02}, \cite{Bor-MSb06} for another model. Here we show that this combination allows also to prove the holomorphy of the emerging eigenvalues w.r.t. $\e$. Moreover, we introduce a new approach of calculating the coefficients for Taylor expansion w.r.t. $\e$ of the emerging eigenvalues. Another advantage of our approach is that it is very useful in studying the same model but in the case of a small width, namely, as $d\to+0$, $\ell=\widetilde{\ell}d$. We found that the aforementioned analytic continuation of the resolvent
can be also used in studying the uniform resolvent convergence in the small-width regime.
In other words, we show that once one can develop such continuation, it is possible to describe the behavior of the emerging eigenvalues and the uniform resolvent convergence in the small-width regime. Studying the small-width case is the subject of the independent paper \cite{Pap2} which is being prepared now and is a continuation of the present paper.

Let us describe briefly the content of the paper. In the next section we formulate the problem and the main results. Third section is devoted to proving general properties of the discrete spectrum. In the forth section we develop the analytic continuation of the resolvent. Employing then this continuation and the nonsymmetric version of Birman-Schwinger principle, in the fifth section we study the emerging eigenvalues.

\section{Formulation of the problem and the main result}\label{Sec.main}
%
Let $x=(x_1,x_2)$ be Cartesian coordinates in $\mathds{R}^2$, and $\Pi$ be an infinite strip of a width $d>0$,
\begin{equation*}
\Pi:=\{x: 0<x_2<d\}.
\end{equation*}
Given a positive number $\ell$, we partition the boundary of $\p\Pi$ into two subsets,
\begin{equation*}
\g_\ell:=\{x: x_1>\ell, x_2=0\}\cup\{x: x_1<-\ell, x_2=d\},\quad \G_\ell:=\p\Pi\setminus\overline{\g}_\ell.
\end{equation*}

The main object of our study is the Laplacian in $\Pi$ subject to Dirichlet boundary condition on $\g_\ell$ and to Neumann one on $\G_\ell$, which we denote as $\Op_\ell$, cf. figure~\ref{pic1}. Rigorously we introduce this operator as the self-adjoint one associated with the symmetric lower-semibounded closed sesquilinear form
\begin{equation}\label{2.3}
\fh(u,v):=(\nabla u, \nabla v)_{L_2(\Pi)}\quad\text{on}\quad \Ho(\Pi,\g_\ell).
\end{equation}
Hereinafter the symbol $\Ho^1(\Om,S)$ indicates the subspace of the functions in $\H^1(\Om)$ vanishing on $S$.

The main goal of this paper is to study the discrete spectrum of $\Op_\ell$ for different values of $\ell$. To formulate the main results, we shall need additional notations. In what follows by $\spec(\cdot)$, $\discspec(\cdot)$, $\essspec(\cdot)$ we denote respectively the spectrum,  the discrete spectrum, and the essential spectrum of an operator. We indicate
\begin{equation*}
E_1:=\frac{\pi^2}{4d^2},\quad
\chi_1(x):=\left\{
\begin{aligned}
&\sqrt{\frac{2}{d}} \sin \frac{\pi}{2d}
 x_2, && x_1>0,
\\
&\sqrt{\frac{2}{d}} \sin  \frac{\pi}{2d}(1-x_2), && x_1<0,
\end{aligned}
\right.
\end{equation*}
and  $\Pi_a:=\Pi\cap\{x: |x_1|<a\}$.

To formulate the main results, we shall employ an auxiliary operator $\Op^*_\ell$. It is the Laplacian in $\Pi$ subject to Dirichlet boundary condition on $\g_\ell^*:=\{x: |x_1|>\ell, x_2=0\}$  and to Neumann one on $\p\Pi\setminus\overline{\g}_\ell^*$. We again introduce it as associated with the form (\ref{2.3}) but on the domain $\Ho^1(\Pi,\g_\ell^*)$, cf. figure~\ref{pic2}. Similar operator but with Dirichlet boundary condition on the upper boundary
of $\Pi$ has already been studied in \cite{Bor-MSb06}. The technique employed in this paper does not use essentially the type of the boundary condition on the upper boundary of $\Pi$. This is why one can reprove all the results of \cite{Bor-MSb06} for the operator $\Op^*_\ell$ up to some minor simple changes related to the Neumann condition on the upper boundary of $\Pi$. We also refer to \cite{DK} where some properties of spectrum of $\Op_\ell^*$ were also studied. We collect the reformulation of the results of \cite{Bor-MSb06} for $\Op_\ell^*$ and the results of \cite{DK} in
\begin{theorem}\label{th2.1}
The spectrum of $\Op_\ell^*$  has the following properties:
\begin{enumerate}
\item\label{it1th2.1} The identity
\begin{equation*}
\essspec(\Op_\ell^*)=[E_1,+\infty)
\end{equation*}
holds true.

\item\label{it2th2.1} There exist infinitely many critical values
\begin{equation*}
0=\ell_1^*<\ell_2^*<\ldots<\ell_n^*<\ldots
\end{equation*}
of the length $\ell$ such that for $\ell\in(\ell_n^*,\ell_{n+1}^*]$ the operator $\Op_\ell^*$ has precisely $n$ isolated eigenvalues $\L_m^*(\ell)$, $m=1,\ldots,n$. These eigenvalues are simple and supposed to be ordered in the ascending order,
\begin{equation*}
\L_1^*(\ell)<\L_2^*(\ell)<\ldots<\L_n^*(\ell).
\end{equation*}

\item\label{it3th2.1}  The number of the eigenvalues $\L_m^*(\ell)$ is estimated as
    \begin{equation*}
    \left[\frac{\ell}{d}\right]\leqslant \#\discspec(\Op_\ell^*)\leqslant \left[\frac{\ell}{d}\right]+1,
    \end{equation*}
    where $[\cdot]$ denotes the entire part of a number. The eigenvalues $\L_m^*(\ell)$ are continuous and non-increasing in $\ell$. They satisfy two-sided estimates
    \begin{equation}\label{2.7}
    \frac{\pi^2(m-1)^2}{4\ell^2} < \L_m^*(\ell) < \frac{\pi^2 m^2}{4\ell^2},\quad m=1,\ldots, n.
    \end{equation}
    The corresponding eigenfunctions are even in $x_1$ for odd $m$ and odd in $x_1$ for even $m$.

\item\label{it4th2.1} The number $\ell$ is critical, if and only if the boundary value problem
    \begin{gather}
    -\Delta \phi_n^*=E_1\phi_n^* \quad\text{in} \quad \Pi,
    \\
    \phi_n^*=0\quad \text{on} \quad \g_\ell^*,\qquad \frac{\p\phi_n^*}{\p x_2}=0\quad \text{on}\quad \p\Pi\setminus\overline{\g_\ell^*},
    \end{gather}
    has a bounded solution belonging to $\H^1(\Pi_a)$ for each $a>0$ and having the asymptotics
    \begin{equation*}
    \phi_n^*(x)=\chi_1(x)+\Odr\big(\E^{-\frac{\sqrt{8}\pi}{d}x_1}\big),\quad x_1\to+\infty.
    \end{equation*}
    If exists such a solution, it is unique and even in $x_1$ for odd $n$ and odd in $x_1$ for even $n$.

\item\label{it5th2.1} As $\ell\to\ell_n^*+0$, $n\geqslant 2$, the eigenvalue $\L_n^*(\ell)$ satisfies the asymptotics
    \begin{equation*}
    \L_n^*(\ell)=E_1- \frac{(\ell-\ell_n^*)^2}{(\ell_n^*)^2} \left( \int\limits_\Pi \left|\frac{\p\phi_n^*}{\p x_1}\right|^2\di x\right)^2+ \Odr\big( (\ell-\ell_n^*)^3\big).
    \end{equation*}
    The associated eigenfunction can be chosen so that
    \begin{align*}
    &\psi_n^*(x)= \E^{-\sqrt{E_1-\L_n^*(\ell)}x_1}\chi_1(x) + \Odr\Big( \E^{-\sqrt{\frac{9\pi^2}{4d^2}-\L_n^*(\ell)} x_1}
    \Big),\quad x_1\to+\infty, 
    \\
    &\psi_n^*(x)=\phi_n^*(x)+\Odr\big((\ell-\ell_n^*)^{1/2}\big) \quad\text{in}\quad \H^1(\Pi_a), 
    \end{align*}
    for each $a>0$.
\end{enumerate}
\end{theorem}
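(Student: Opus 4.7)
The plan is to treat Theorem~\ref{th2.1} as a compilation whose five parts follow, with only cosmetic modifications, from the arguments already carried out in \cite{Bor-MSb06} (where the upper boundary carried a Dirichlet rather than Neumann condition) and \cite{DK}. The guiding observation is that the techniques used there---Dirichlet--Neumann bracketing, min--max, the reflection symmetry $x\mapsto(-x_1,d-x_2)$, and threshold resolvent analysis---are insensitive to whether the upper boundary is Dirichlet or Neumann, provided one replaces the transverse Dirichlet eigenvalues by the Dirichlet--Neumann ones. For the strip of width $d$, the lowest transverse Dirichlet--Neumann eigenvalue is $E_1=\pi^2/(4d^2)$ and the second is $9\pi^2/(4d^2)$, which explains both the location of the essential threshold and the remainder exponent appearing in item~\ref{it5th2.1}.

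For item~\ref{it1th2.1} I would apply Dirichlet--Neumann bracketing: cut $\Pi$ at $x_1=\pm L$ with $L>\ell$, impose Dirichlet (respectively Neumann) at the cuts, and separate variables on the two resulting half-strips, whose transverse problem carries Dirichlet--Neumann conditions and has spectrum $[E_1,+\infty)$; the bounded middle piece contributes only discrete spectrum, and sending $L\to\infty$ recovers $\essspec(\Op_\ell^*)=[E_1,+\infty)$. For items~\ref{it2th2.1}--\ref{it3th2.1} I would combine min--max with bracketing on $\Pi_\ell$: the counting function below $E_1$ is sandwiched between those of the one-dimensional Dirichlet and Neumann Laplacians on $(-\ell,\ell)$ obtained after projection onto $\chi_1$, yielding the bound $[\ell/d]\leqslant \#\discspec\leqslant [\ell/d]+1$ together with the two-sided estimates \eqref{2.7}; strict inequalities come from strict monotonicity of the form as the Dirichlet region is enlarged or shrunk. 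Simplicity and the stated parity pattern follow from the reflection symmetry, which commutes with $\Op_\ell^*$ and decomposes the form domain into even and odd $x_1$ sectors, with min--max within each sector producing the alternation of parities along $m$.

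Items~\ref{it4th2.1} and \ref{it5th2.1} I would handle through the threshold resonance framework of \cite{Bor-MSb06}: $\ell$ is critical precisely when the reduced problem at energy $E_1$ admits a bounded generalized solution matching $\chi_1$ at $+\infty$ with exponentially decaying correction, which is set up by an analytic Fredholm argument applied to a meromorphic continuation of the resolvent across the threshold. The leading asymptotics of $\L_n^*(\ell)$ then arises from expanding a Birman--Schwinger-type equation around the resonance; the normalization $\bigl(\int_\Pi|\partial_{x_1}\phi_n^*|^2\,dx\bigr)^2$ appears as the modulus of the first nonzero coefficient in this expansion. The main obstacle---minor given the prior literature---is verifying that the exterior matching of $\psi_n^*$ is controlled by the second Dirichlet--Neumann transverse mode, giving the exponent $\sqrt{9\pi^2/(4d^2)-\L_n^*(\ell)}$, so that the Dirichlet--Neumann transverse ladder $((2k-1)\pi/(2d))^2$ must be used in place of the pure Dirichlet ladder $(k\pi/d)^2$ that appeared in \cite{Bor-MSb06}; the rest of the resonance analysis then transfers verbatim.
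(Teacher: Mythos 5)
Your proposal matches the paper's own treatment: the paper offers no independent proof of Theorem~\ref{th2.1}, but presents it as a reformulation of the results of \cite{Bor-MSb06} and \cite{DK}, noting precisely (as you do) that the arguments there are insensitive to the type of condition on the upper boundary once the transverse Dirichlet modes are replaced by the Dirichlet--Neumann ladder with threshold $E_1=\pi^2/(4d^2)$ and second mode $9\pi^2/(4d^2)$. Your sketches of bracketing, symmetry decomposition, and the threshold-resonance/Birman--Schwinger analysis are exactly the ingredients of those cited proofs, so this is essentially the same route.
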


\begin{figure}
\begin{center}\includegraphics[scale=0.6]{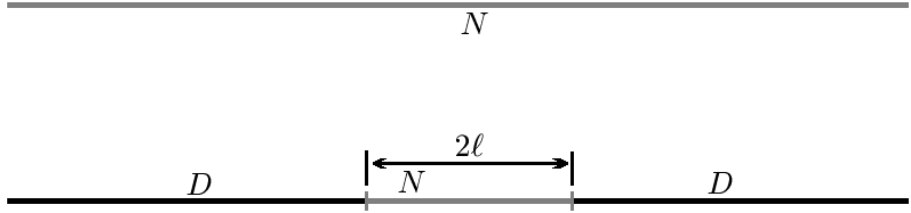}

\caption{The boundary conditions for the auxiliary operator} \label{pic2}
\end{center}
\end{figure}

The main results of this paper are of same fashion as in the last theorem, but for the operator $\Op_\ell$. We gather them in the following theorems. Some of the results are stronger than those in \cite{Bor-MSb06}, \cite{DK}, \cite{BorExGad02}.

Our first main theorem describes general properties of the spectrum of $\Op_\ell$.
\begin{theorem}\label{th2.2}
The spectrum of $\Op_\ell$  has the following properties:
\begin{enumerate}
\item\label{it1th2.2} The identity
\begin{equation*}
\essspec(\Op_\ell)=[E_1,+\infty)
\end{equation*}
holds true.

\item\label{it2th2.2} There exist infinitely many critical values
\begin{equation*}
0<\ell_1 <\ell_2 <\ldots<\ell_n <\ldots
\end{equation*}
of the length $\ell$ such that for $\ell\in(\ell_n,\ell_{n+1}]$ the operator $\Op_\ell$ has precisely $n$ isolated eigenvalues $\L_m(l)$, $m=1,\ldots,n$. These eigenvalues are simple and supposed to be ordered in the ascending order,
\begin{equation}\label{2.14}
\L_1(\ell)<\L_2(\ell)<\ldots<\L_n(\ell).
\end{equation}
The numbers $\ell_m$ satisfy the estimates
\begin{equation}\label{2.15}
\frac{1}{2}\ell_{2m-1}^*\leqslant \ell_m\leqslant \frac{1}{2}\ell_{2m}^*.
\end{equation}

\item\label{it3th2.2}  The number of the eigenvalues $\L_m(\ell)$ is  estimated as
    \begin{equation}\label{2.16}
    \left[\frac{\#\discspec(\Op_{2\ell}^*)}{2}\right]\leqslant \#\discspec(\Op_\ell)\leqslant \left[\frac{\#\discspec(\Op_{2\ell}^*)}{2}\right]+1,
    \end{equation}
    where $[\cdot]$ denotes the entire part of the number. The eigenvalues $\L_m(\ell)$ are real-holomorphic and non-increasing in $\ell$. They satisfy two-sided estimates
    \begin{equation}\label{2.17}
   \L_{2m-1}^*(2\ell)\leqslant \L_m(\ell)\leqslant \L_{2m}^*(2\ell),\quad m=1,\ldots, n.
    \end{equation}
    The corresponding eigenfunctions  $\psi_m(x,\ell)$ are even w.r.t. the symmetry transformation
    \begin{equation}\label{2.7a}
    (x_1,x_2)\mapsto (-x_1,d-x_2)
    \end{equation}
    for odd $m$ and odd for even $m$, i.e.,
    \begin{equation}\label{2.19a}
    \begin{aligned}
    &\psi_m(-x_1,d-x_2)=\psi_m(x)&& \text{for odd $m$},
    \\
    &\psi_m(-x_1,d-x_2)=-\psi_m(x)&& \text{for even $m$}.
    \end{aligned}
    \end{equation}

\end{enumerate}
\end{theorem}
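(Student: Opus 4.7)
The plan is to proceed in five steps, corresponding to (i) the cylindrical-end analysis for $\essspec(\Op_\ell)$, (ii) the rotational symmetry $\sigma$ with the decomposition $\Op_\ell=\Op_\ell^+\oplus\Op_\ell^-$, (iii) the $\ell$-dependence of the form domain (monotonicity and real-holomorphy), (iv) the bracketing estimates relating $\Op_\ell$ to $\Op_{2\ell}^*$, and (v) the critical values together with the strict ordering and parity pattern.

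For (i) I would apply a Dirichlet-Neumann bracketing along $x_1=\pm R$ and let $R\to+\infty$: outside $\Pi_R$ the operator splits into two half-strip Laplacians with Dirichlet on one horizontal side and Neumann on the other, each with continuous spectrum $[E_1,+\infty)$, while the piece on $\Pi_R$ has compact resolvent; a Weyl sequence built from $\E^{\iu k x_1}\chi_1(x)$ with a far-field cut-off gives the reverse inclusion. For (ii) the unitary $S:u\mapsto u(-x_1,d-x_2)$ induced by $\sigma$ is an isometry of $L_2(\Pi)$ which preserves $\Ho^1(\Pi,\g_\ell)$ and leaves $\fh$ invariant, so $[\Op_\ell,S]=0$ and $\Op_\ell$ splits as the direct sum of its restrictions to the $\pm1$-eigenspaces of $S$; once simplicity is known (see (iv)), each one-dimensional eigenspace is automatically an eigenspace of $S$, giving (\ref{2.19a}). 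For (iii), the inclusion $\g_{\ell_2}\subset\g_{\ell_1}$ for $\ell_1<\ell_2$ gives the inclusion $\Ho^1(\Pi,\g_{\ell_1})\subset\Ho^1(\Pi,\g_{\ell_2})$ of form domains, and min-max yields the non-increasingness of $\L_m(\ell)$; real-holomorphy I would obtain by pulling back with a real-analytic family of diffeomorphisms of $\Pi$ mapping $\g_\ell$ onto a fixed reference set, reducing the problem to an $\ell$-analytic family of operators on a common form domain, to which Kato's perturbation theory for simple isolated eigenvalues applies.

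The heart of the argument is (iv). The plan here is to combine the $\sigma$-decomposition with a Dirichlet-Neumann bracketing designed so that each sector $\Op_\ell^\pm$, after being restricted to a fundamental domain of $\sigma$ and appropriately unfolded, becomes comparable to a symmetry sector of $\Op_{2\ell}^*$ under the reflection $x_1\mapsto-x_1$. Using Theorem \ref{th2.1}(\ref{it3th2.1}), which provides the parity of the eigenfunctions $\phi_k^*$ and the numerical estimates (\ref{2.7}) for $\L_k^*$, a bookkeeping of multiplicities yields (\ref{2.17}); combined with Theorem \ref{th2.1}(\ref{it3th2.1}), this gives the counting bound (\ref{2.16}). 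For (v), the critical values $\ell_m$ are the infima of those $\ell$ for which $\Op_\ell$ has at least $m$ discrete eigenvalues; substituting $\ell=\ell_m$ into (\ref{2.17}) and invoking $\L_n^*(\ell_n^*)=E_1$ yields the estimates (\ref{2.15}). Simplicity and the strict ordering (\ref{2.14}) follow from simplicity of the bracket eigenvalues $\L_k^*$ together with the parity alternation between the sectors $\Op_\ell^\pm$.

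The principal obstacle I anticipate is the precise bracketing construction in (iv). A naive Dirichlet-Neumann bracketing along the $\sigma$-invariant line $x_1=0$ relates $\Op_\ell$ only to $\Op_\ell^*$ (whose Neumann window has length $2\ell$), whereas (\ref{2.17}) requires a comparison with $\Op_{2\ell}^*$ (window of length $4\ell$). Recovering the factor of two requires a more delicate combined use of the $\sigma$-symmetry of $\Op_\ell$ and the $(x_1\mapsto-x_1)$-symmetry of $\Op_{2\ell}^*$, together with the identification of a suitable fundamental domain and a careful unfolding; this is the step where I expect most of the technical work to lie.
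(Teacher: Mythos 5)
Your outline handles the peripheral items adequately (the essential spectrum, monotonicity via the form-domain inclusion, holomorphy via an $\ell$-analytic change of variables --- the paper does the latter exactly this way with the substitution (\ref{5.1}) and Kato theory), but the central step, the two-sided estimates (\ref{2.17}), is precisely the point you leave open, and the route you sketch for it is unlikely to work as stated. The symmetry (\ref{2.7a}) is a rotation by $\pi$ about the point $(0,d/2)$; its fixed-point set is a single point, not a line, so the even/odd subspaces under it are characterized on $\{x_1=0\}$ only by the nonlocal conditions $u(0,x_2)=\pm u(0,d-x_2)$, $\p u/\p x_1(0,x_2)=\mp\,\p u/\p x_1(0,d-x_2)$, and not by Dirichlet or Neumann conditions. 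Consequently ``restricting each $\sigma$-sector to a fundamental domain and unfolding'' does not produce half-strip operators with classical boundary conditions to which Dirichlet--Neumann bracketing can be applied, and the factor of two that you correctly identify as the obstacle is not recovered this way.

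The paper's actual argument does not pass through the $\sigma$-decomposition at this stage at all: one brackets at the single segment $\ga=\{x:\ x_1=\ell,\ 0<x_2<d\}$, i.e.\ at an endpoint of the Neumann window rather than at the symmetry center. With a Dirichlet (resp.\ Neumann) condition on $\ga$, the right half $\{x_1>\ell\}$ (Dirichlet on the bottom, Neumann on the top) has purely essential spectrum $[E_1,+\infty)$ and contributes no eigenvalues, while the eigenfunctions on the left half $\{x_1<\ell\}$, extended oddly (resp.\ evenly) through $\ga$, become --- after a shift in $x_1$ --- exactly the odd (resp.\ even) eigenfunctions of $\Op^*_{2\ell}$: the Neumann window $(-\ell,\ell)$ of the half problem doubles under the reflection to a window of length $4\ell$, which is that of $\Op^*_{2\ell}$. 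Combined with the parity classification in Theorem~\ref{th2.1}, this yields $\L^*_{2m-1}(2\ell)\leqslant\L_m(\ell)\leqslant\L^*_{2m}(2\ell)$, from which (\ref{2.16}), simplicity, (\ref{2.14}) and the bounds (\ref{2.15}) all follow. Two further points you gloss over: first, the alternation (\ref{2.19a}) does not follow from simplicity alone (simplicity gives only \emph{some} parity); the paper pins down which parity by bracketing at $x_1=\pm\ell$ inside each $\sigma$-sector and locating $\L_m(\ell)$ in the interval $\bigl(\tfrac{\pi^2(m-1)^2}{4\ell^2},\tfrac{\pi^2 m^2}{\ell^2}\bigr)$ via (\ref{2.7}) and (\ref{2.17}). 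Second, the strict positivity $\ell_1>0$ in item~\ref{it2th2.2} cannot be extracted from (\ref{2.17}), since the lower bound $\L_1^*(2\ell)<E_1$ is available for every $\ell>0$; the paper imports the emptiness of the discrete spectrum for small $\ell$ from \cite{DK} and only then runs the monotonicity--continuity induction you describe.
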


For each $a>0$ we let
$\Pi_a^\pm:=\Pi\cap\{x: \pm x_1>a\}$.

The second theorem is devoted to the emergence of new eigenvalues from the threshold of the essential spectrum as $\ell$ is close to a critical value.

\begin{theorem}\label{th2.3}
The behavior of the eigenvalues of $\Op_\ell$ in a vicinity of the threshold of the essential spectrum is described by the following statements:
\begin{enumerate}
\item\label{it4th2.2} The number $\ell=\ell_n$ is critical, if and only if the boundary value problem
    \begin{equation}\label{2.18}
    \begin{gathered}
    -\Delta \phi_n=E_1\phi_n \quad\text{in} \quad \Pi,
    \\
    \phi_n=0\quad \text{on} \quad \g_{\ell_n},\qquad \frac{\p\phi_n}{\p x_2}=0\quad \text{on}\quad \G_{\ell_n}
    \end{gathered}
    \end{equation}
    has a bounded solution belonging to $\H^1(\Pi_a)$ for each $a>0$ and having the asymptotics
    \begin{equation}\label{2.19}
    \phi_n(x)=\chi_1(x)+\Odr\big(\E^{-\frac{\sqrt{8}\pi}{d}x_1}\big),\quad x_1\to+\infty.
    \end{equation}
    This solution is unique. For even $n$ it is odd w.r.t.
    the symmetry transformation (\ref{2.7a}), and is even for odd $m$.

\item\label{it5th2.2} Let $\e:=\ell-\ell_n$. For sufficiently small $\e$ the eigenvalue $\L_n(\ell)$ is given by the formula
    \begin{equation}\label{2.31}
    \L_n(\ell)=E_1-\mu_n^2(\e).
    \end{equation}
    Here $\mu_n(\e)$ is a real-holomorphic function represented as a convergent series
    \begin{align}
    &\mu_n(\e)=\sum\limits_{j=1}^{\infty}\e^j \mu_j^{(n)},\label{2.20}
    \\
    &\mu_1^{(n)}=\frac{1}{\ell_*} \int\limits_{\Pi} \left|\frac{\p \phi_n}{\p x_1}\right|^2\di x,
    \label{2.24}
    \\
    &
    \begin{aligned}
    \mu_2^{(n)}=-\frac{(\mu_1^{(n)})^2}{2} \Big(& \|\phi_n\|_{L_2(\Pi_{\ell_*})}^2
    + \|\phi_n-\chi_1\|_{L_2(\Pi_{\ell_*}^+)}
    \\
    & +\|\phi_n-\wp_n\chi_1\|_{L_2(\Pi_{-\ell_*}^-)}
    \Big) +  \frac{1}{\ell_*} \int\limits_{\Pi} \frac{\p\phi_n}{\p x_1} \frac{\p\ph_n}{\p x_1}\di x,
    \end{aligned}    \label{2.29}
    \end{align}
    where $\phi_n$ is the unique solution to the problem
    \begin{equation}\label{2.30}
    \begin{aligned}
    &-\Delta \ph_n=E_1\ph_n \quad\text{in} \quad \Pi,
    \quad \ph_n=0\quad \text{on} \quad \g_{\ell_n},\quad \frac{\p\ph_n}{\p x_2}=0\quad \text{on}\quad \G_{\ell_n},
    \\
    &\ph_n(x)=-\mu_1^{(n)}\, x_1\chi_1(x)
    +\Odr\big(\E^{-\frac{\sqrt{8}\pi}{d}|x_1|}\big), \quad x_1\to+\infty,
    \\
    &\ph_n(x)=\wp\, \mu_1^{(n)} x_1\chi_1(x) +\Odr\big(\E^{-\frac{\sqrt{8}\pi}{d}|x_1|}\big), \quad x_1\to-\infty,
    \\
    &\ph_n(x)=
    \frac{2(\mu_1^{(n)})^{\frac{1}{2}}}{\pi^{\frac{1}{2}}} r^{-\frac{1}{2}}\sin \frac{\tht}{2} 
    +\Odr(r^{\frac{1}{2}}), \quad x\to(\ell_n,0),
    \\
    &\ph_n(-x_1,d-x_2)=(-1)^{n-1}\ph_n(x),
    \end{aligned}
    \end{equation}
    where $(r,\tht)$ are the polar coordinates centered at $(\ell_n,0)$. Other coefficients of the series (\ref{2.20}) are given by the formulas (\ref{4.26}).

\end{enumerate}
\end{theorem}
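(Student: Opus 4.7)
The plan is to reformulate the eigenvalue equation $\Op_\ell \psi = (E_1 - \mu^2)\psi$ near the threshold as an analytic nonlinear equation in $\mu$, with $\e := \ell - \ell_n$ as auxiliary parameter, via a nonsymmetric Birman--Schwinger reduction in the spirit of \cite{G02}, and then apply the analytic implicit function theorem to produce a holomorphic branch $\mu = \mu_n(\e)$. This is the same general scheme as in \cite{BorExGad02}, \cite{Bor-MSb06}, but upgraded to track joint holomorphy of the reduced equation in both $(\mu,\e)$ rather than only leading-order asymptotics.

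The first ingredient, carried out in Section~4, is the analytic continuation of the resolvent of the reference operator $\Op_{\ell_n}$ through the threshold $E_1$. Concretely, one produces a family $R(\mu)$, meromorphic in a complex neighborhood of $\mu=0$ and acting between weighted $L_2$ spaces chosen so that moving the Dirichlet--Neumann junctions from $\pm\ell_n$ to $\pm(\ell_n+\e)$ is represented by a compact operator $\cc{K}(\mu,\e)$ holomorphic in both variables. The eigenvalue condition then reduces to the solvability of $(I + \cc{K}(\mu,\e))u = 0$, and each non-trivial solution $u$ corresponds, via $R(\mu)$, to an eigenfunction of $\Op_{\ell_n+\e}$ with eigenvalue $E_1 - \mu^2$.

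For item \ref{it4th2.2}, $\ell_n$ is critical precisely when $\ker(I + \cc{K}(0,0))$ is non-trivial, for otherwise no eigenvalue branch could emerge from the trivial solution at $\e=0$. Transcribing the kernel back into physical variables yields a bounded $\phi_n$ on $\Pi$ satisfying (\ref{2.18}) with the asymptotics (\ref{2.19}); uniqueness follows from the simplicity of the emerging eigenvalue established in Theorem~\ref{th2.2}(\ref{it2th2.2}), since two independent threshold solutions would generate two distinct eigenvalue branches. The parity assertion is obtained by decomposing the whole reduction under the symmetry transformation (\ref{2.7a}), which commutes with $\Op_\ell$, and running the argument separately in the even and odd sectors, so that $\phi_n$ inherits the parity of the eigenfunction $\psi_n$ from Theorem~\ref{th2.2}(\ref{it3th2.2}).

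For item \ref{it5th2.2}, I apply a Lyapunov--Schmidt reduction to $(I + \cc{K}(\mu,\e))u=0$ near $(\mu,\e)=(0,0)$, the one-dimensional kernel being spanned (via $R(\mu)$) by the profile $\phi_n$. The reduced equation is a scalar function $F(\mu,\e)$ holomorphic in both variables with $F(0,0)=0$, and $\p_\mu F(0,0)$ is non-zero: it can be computed by a Green's identity on $\Pi$ obtained by pairing the equation for $\phi_n$ with a test function matching the $\mu$-derivative of the ansatz $\E^{-\mu x_1}\chi_1$ and integrating by parts against the asymptotics (\ref{2.19}); this produces formula (\ref{2.24}) for $\mu_1^{(n)}$. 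The analytic implicit function theorem then provides $\mu=\mu_n(\e)$ holomorphic near $0$. Formula (\ref{2.29}) for $\mu_2^{(n)}$ follows by differentiating $F(\mu_n(\e),\e)=0$ twice at $\e=0$, which introduces the first corrector $\ph_n$ defined by (\ref{2.30}); Fredholm solvability of (\ref{2.30}) is ensured precisely by the identity (\ref{2.24}). The higher coefficients (\ref{4.26}) come out by matching successive Taylor coefficients of $F(\mu_n(\e),\e)=0$, each step being solvable by the same Fredholm argument. The main obstacle in the whole scheme is the construction and control of the analytic continuation $R(\mu)$ in Section~4 and the verification that $\cc{K}(\mu,\e)$ is jointly compact and holomorphic in the right weighted setting; once this functional-analytic framework is established, the Lyapunov--Schmidt reduction, the formulas (\ref{2.24}) and (\ref{2.29}), and the recurrence (\ref{4.26}) are essentially formal consequences.
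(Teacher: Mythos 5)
Your overall strategy --- analytic continuation of the resolvent through $E_1$ combined with a nonsymmetric Birman--Schwinger/Lyapunov--Schmidt reduction and the analytic implicit function theorem --- is indeed the route the paper takes. But the step you defer as ``the main obstacle'' is precisely the idea the proof cannot do without: you never say how ``moving the Dirichlet--Neumann junctions'' from $\pm\ell_n$ to $\pm(\ell_n+\e)$ becomes an operator perturbation jointly holomorphic in $(\mu,\e)$. Since the form domain of $\Op_{\ell_n+\e}$ changes with $\e$, choosing weighted $L_2$ spaces does not by itself produce such a representation; the paper achieves it with the change of variables (\ref{5.1}), which pulls $\g_{\ell_n+\e}$, $\G_{\ell_n+\e}$ back to $\g_{\ell_n}$, $\G_{\ell_n}$ and converts the perturbation into the compactly supported second-order operator $\e\cc{L}_\e$ of (\ref{5.3}), whose coefficients are polynomial in $\e$. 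Only after this does the reduced scalar equation (\ref{5.7}), $\mu=\e\,\cc{P}\big(\I-\e\cc{L}_\e\cc{T}_5(\mu)\big)^{-1}\cc{L}_\e\phi$, make sense with a jointly holomorphic right-hand side, and only then does the implicit function theorem deliver the convergent series (\ref{2.20}) --- which is exactly the claimed improvement over \cite{BorExGad02}, \cite{Bor-MSb06}. Postulating a jointly holomorphic compact $\cc{K}(\mu,\e)$ leaves the central holomorphy claim, and hence (\ref{2.20}), unproven.

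Two further gaps. First, your uniqueness argument for $\phi_n$ is circular and incomplete: the rank-one simple-pole representation (\ref{v3.14}), on which the scalar reduction rests, already presupposes that the threshold kernel is one-dimensional, so uniqueness cannot be recovered afterwards from the eigenvalue counting of Theorem~\ref{th2.2}; moreover, adding an exponentially decaying threshold solution to $\phi_n$ would yield a second solution with the same asymptotics (\ref{2.19}), so uniqueness forces one to exclude decaying solutions --- this is done in Lemma~\ref{lm4.2} via the integral identities built from the auxiliary functions $U$ and $V$, and nothing in your sketch replaces that. Second, to conclude that the branch $\mu_n(\e)$ corresponds to a genuine eigenvalue (and hence that $\ell_n$ is critical in the ``if'' direction) one must verify $\mu_n(\e)>0$, i.e.\ the positivity coming from (\ref{2.24}); and the derivation of (\ref{2.24}), (\ref{2.29}) and the recurrence (\ref{4.26}) in the form stated requires the corner asymptotics of $\phi_n$, $\psi_1$, $\ph_n$ at $(\ell_n,0)$ (the $r^{\pm\frac{1}{2}}\sin\frac{\tht}{2}$ behavior entering (\ref{2.30})) together with the hierarchy (\ref{5.35}), (\ref{5.40}), (\ref{5.41}) and the solvability Lemma~\ref{lm5.3}; ``differentiating $F$ twice'' does not by itself produce these formulas. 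A minor conflation to fix as well: in the reduced equation the $\mu$-derivative of the defining function is trivially $1$, while the Green's-type identity you invoke computes the coefficient $\mu_1^{(n)}$ of $\e$, not $\frac{\p F}{\p\mu}(0,0)$.
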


In the proof of the item~\ref{it5th2.2} of the last theorem we also show that the eigenfunction $\phi_n$ associated with $\L_n$ is holomorphic w.r.t. $\e$ up to a special change of variables. We do not give this result here since it requires additional notations and we refer to Sec.~\ref{sec:emev}, where this result is presented in all the details -- see the series (\ref{5.11}) and the construction of its coefficients.

The item~\ref{it5th2.2} of the last theorem is much stronger than similar results established in \cite{BorExGad02}, \cite{Bor-MSb06}. Namely, in these papers only two-terms asymptotics for the emerging eigenvalues were obtained. Our results provides a complete asymptotic expansion (\ref{2.31}), (\ref{2.20}). Moreover, we also prove that this series converges to the eigenvalue and is holomorphic w.r.t. a small parameter $\e$. The formula for the third term asymptotics was absent in \cite{BorExGad02}, \cite{Bor-MSb06}, too. In our case it is the identity (\ref{2.29}). The same concerns the eigenfunction; in \cite{BorExGad02}, \cite{Bor-MSb06} only the rate of the convergence of the eigenfunctions associated with emerging eigenvalues was  estimated. We also observe that the holomorphy of the eigenvalues w.r.t. the window's length was not proven in \cite{BorExGad02}, \cite{Bor-MSb06}. In our case we establish such property for the eigenvalues in item~\ref{it3th2.2} of Theorem~\ref{th2.2}.

\section{Essential spectrum, existence and continuity of the eigenvalues}

In this section we prove the general properties of the spectrum of $\Op_\ell$ claimed in Theorem~\ref{th2.2}. We begin  with item~\ref{it1th2.2} of this theorem.

\begin{lemma}\label{lm3.1}
The item~\ref{it1th2.2} of Theorem~\ref{th2.2} is valid.
\end{lemma}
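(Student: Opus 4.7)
The plan is to establish $\essspec(\Op_\ell)=[E_1,+\infty)$ by proving two complementary inclusions. For the inclusion $[E_1,+\infty)\subseteq\essspec(\Op_\ell)$, given $\lambda\geqslant E_1$ I write $\lambda=E_1+k^2$ with $k\geqslant 0$, fix a cutoff $\eta\in C_0^\infty((0,1))$, and consider the singular sequence
\[
u_n(x):=n^{-1/2}\,\eta\bigl((x_1-n^2)/n\bigr)\,\E^{\iu k x_1}\,\chi_1(x).
\]
For $n$ large the support of $u_n$ lies entirely in the right half-strip $\{x_1>\ell\}$, where $\chi_1$ vanishes on $\{x_2=0\}$ and has vanishing $x_2$-derivative on $\{x_2=d\}$; hence $u_n\in\Ho^1(\Pi,\g_\ell)$. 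Since $\chi_1$ is the transverse ground state with eigenvalue $E_1$ and $\lambda-E_1=k^2$, a direct computation gives $(\Op_\ell-\lambda)u_n=-(\eta_n''+2\iu k\,\eta_n')\E^{\iu k x_1}\chi_1$ with $\eta_n$ denoting the $x_1$-factor of $u_n$, so the $L_2(\Pi)$-norm of this expression is $\Odr(n^{-1})$, while $\|u_n\|_{L_2(\Pi)}$ stays a nonzero constant and $u_n\rightharpoonup 0$ by escape of supports. Weyl's criterion then places $\lambda$ in $\essspec(\Op_\ell)$.

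For the opposite inclusion $\essspec(\Op_\ell)\cap[0,E_1)=\emptyset$, I apply Dirichlet--Neumann bracketing: cut $\Pi$ along the cross sections $\{x_1=\pm\ell\}$ and impose on each cut either additional Neumann or Dirichlet boundary conditions, obtaining self-adjoint operators $\Op_\ell^{\mathrm{N}}$ and $\Op_\ell^{\mathrm{D}}$. Each of these decomposes as an orthogonal sum of a Laplacian on the bounded rectangle $\Pi_\ell$ (whose spectrum is purely discrete) and two Laplacians on the semi-infinite half-strips $\Pi\cap\{\pm x_1>\ell\}$. On each half-strip the boundary conditions allow separation of variables; the transverse eigenvalues are $\pi^2(2j-1)^2/(4d^2)$, $j\geqslant 1$, whose smallest value is $E_1$, so the half-strip operators have purely essential spectrum equal to $[E_1,+\infty)$. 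Consequently $\essspec(\Op_\ell^{\mathrm{N}})=\essspec(\Op_\ell^{\mathrm{D}})=[E_1,+\infty)$. The natural inclusions of form domains yield $\Op_\ell^{\mathrm{N}}\leqslant\Op_\ell\leqslant\Op_\ell^{\mathrm{D}}$ in the form sense, so that the $n$-th min-max values obey $\mu_n(\Op_\ell^{\mathrm{N}})\leqslant\mu_n(\Op_\ell)\leqslant\mu_n(\Op_\ell^{\mathrm{D}})$; passing to the limit $n\to\infty$ on both sides forces $\inf\essspec(\Op_\ell)=E_1$, ruling out essential spectrum below $E_1$.

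Combining the two inclusions yields the claim. The only mildly technical step is the form-domain comparison underlying the bracketing: the form domain of $\Op_\ell^{\mathrm{N}}$ drops the continuity requirement across the cuts, whereas that of $\Op_\ell^{\mathrm{D}}$ imposes an additional vanishing condition on the cuts. Both are standard observations from the theory of quadratic forms and do not constitute a serious obstacle.
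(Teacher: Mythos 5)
Your argument is correct, but it follows a different route from the paper. The paper's proof of Lemma~\ref{lm3.1} is a two-line reduction: it invokes the stability of the essential spectrum under a change of the boundary condition on a bounded portion of $\p\Pi$ (proved ``by analogy with Theorem~2.1 in \cite{BorEkhKov}, reproducing the ideas of \cite{Bi}''), which identifies $\essspec(\Op_\ell)$ with $\essspec(\Op_0)$, and then computes the latter. You instead give a self-contained two-inclusion proof: an explicit Weyl singular sequence $u_n=n^{-1/2}\eta((x_1-n^2)/n)\E^{\iu k x_1}\chi_1$ supported far to the right (where $\chi_1$ satisfies the correct Dirichlet/Neumann transverse conditions, so $u_n\in\Ho^1(\Pi,\g_\ell)$ and $\|(\Op_\ell-\l)u_n\|=\Odr(n^{-1})$ while $\|u_n\|$ is constant and the supports escape), giving $[E_1,+\infty)\subseteq\essspec(\Op_\ell)$; and Dirichlet--Neumann bracketing along the cross sections $\{x_1=\pm\ell\}$, where the decoupled pieces are a bounded rectangle with discrete spectrum and two half-strips with mixed transverse conditions whose lowest transverse eigenvalue is $E_1$, so the min-max values of $\Op_\ell$ are squeezed and no essential spectrum can lie below $E_1$. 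Both steps are sound (in fact, for the second inclusion the Neumann side of the bracketing alone suffices, since $\mu_n(\Op_\ell^{\mathrm{N}})\leqslant\mu_n(\Op_\ell)$ and $\lim_n\mu_n(\Op_\ell^{\mathrm{N}})=E_1$ already give $\inf\essspec(\Op_\ell)\geqslant E_1$). The trade-off: your proof is elementary and avoids any external reference, at the cost of being longer; the paper's proof is shorter but leans on the Birman-type stability result from the cited literature. Incidentally, the bracketing machinery you use is exactly the tool the paper deploys later, in Lemma~\ref{lm3.2}, to prove the eigenvalue estimates (\ref{2.17}).
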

\begin{proof}
The essential spectra of $\Op_\ell$ and $\Op_0$ are same that can be proven completely by analogy with Theorem~2.1 in \cite{BorEkhKov} reproducing the ideas of \cite{Bi}. And it is easy to check that $\essspec(\Op_0)=\essspec(\Op_\ell)= [E_1,+\infty)$.
\end{proof}

Our next step is the estimates (\ref{2.17}).

\begin{lemma}\label{lm3.2}
The estimates (\ref{2.17}) hold true.
\end{lemma}
\begin{proof}
We prove the formulas by standard bracketing arguments \cite[Ch. X\!I\!I\!I, Sec. 15, Prop. 4]{RS}. In the strip $\Pi$ we introduce an additional boundary $\ga:=\{x: x_1=\ell, 0<x_2<d\}$. Consider the operator $\Op_\ell$ and impose Dirichlet or Neumann boundary condition on $\ga$. It increases or decreases the eigenvalue of $\Op_\ell$. This is the main idea of the proof and let us perform it in all the details.

We impose Dirichlet condition on $\ga$ and consider the Laplacians $\Op_\ell^-$ and $\Op_\ell^+$ in $\Pi_{-\ell}^-$ and $\Pi_\ell^+$ subject to Dirichlet boundary condition on $\ga$ and to the same condition on $\p\Pi$ as in the operator $\Op_\ell$. Rigorously we define these operators as associated with the sesquilinear form in (\ref{2.3}) but on $\Ho^1(\Pi_{\pm\ell}^\pm,\g^\pm_\ell)$, $\g^\pm_\ell:=\ga\cup\big(\p\Pi^\pm_{\pm\ell}\cap\g_\ell\big)$.

In the same way as in Lemma~\ref{lm3.1} one can check that
\begin{equation*}
\essspec(\Op_\ell^-)=[E_1,+\infty),
\quad \spec(\Op_\ell^+)=\essspec(\Op_\ell^+)=[E_1,+\infty).
\end{equation*}

Consider now the eigenfunctions of $\Op_\ell^-$. We extend them in an odd way via the boundary $\ga$ and see that up to the shift $x_1\mapsto x_1-2\ell$ the extensions are exactly the eigenfunctions of $\Op^*_{2\ell}$ being odd w.r.t. $x_1$. And vice versa, each odd eigenfunction of $\Op^*_{2\ell}$ generates an eigenfunction of $\Op_\ell$. Bearing in mind the item~\ref{it3th2.1} of Theorem~\ref{th2.1}, we conclude that the isolated eigenvalues of $\Op_\ell^-$ are exactly $\L_{2m}^*(2\ell)$. Thus,
\begin{equation*}
\essspec(\Op_\ell^-\oplus\Op_\ell^+)=[E_1,+\infty),
\end{equation*}
and the isolated eigenvalues of $\Op_\ell^-\oplus\Op_\ell^+$ are $\L_{2m}^*(2\ell)$. At the same time, by the bracketing arguments the eigenvalues of $\Op_\ell^-\oplus\Op_\ell^+$ give the upper bounds for the eigenvalues of $\Op_\ell$. It proves the right part of the estimates (\ref{2.17}). The left part is proven completely in the same way, just on $\ga$ one should impose the Neumann condition.
\end{proof}

Now we are in position to prove the item~\ref{it3th2.2} of Theorem~\ref{th2.2}.
\begin{lemma}\label{lm3.3}
The estimates (\ref{2.16}), (\ref{2.17}) are valid. The eigenvalues $\L_m$ are simple. The corresponding eigenfunctions have certain parity w.r.t. the symmetry transformation (\ref{2.7a}).
\end{lemma}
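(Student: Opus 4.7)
The lemma bundles three claims: the count (\ref{2.16}), simplicity of the eigenvalues $\L_m(\ell)$, and parity of the associated eigenfunctions under the symmetry $T:(x_1,x_2)\mapsto(-x_1,d-x_2)$. I would derive all three from the bracketing sandwich (\ref{2.17}) established in Lemma~\ref{lm3.2}, combined with the $T$-invariance of $\Op_\ell$.

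\textbf{Step 1 (simplicity and the count).} I read the bracketing of Lemma~\ref{lm3.2} as bounds on the $m$-th eigenvalue of $\Op_\ell$ counted with multiplicity, $\mu_m(\ell)$: the Dirichlet side of that argument gives $\mu_m(\ell)\le\L^*_{2m}(2\ell)$ and the Neumann side gives $\L^*_{2m-1}(2\ell)\le\mu_m(\ell)$. The strict interlacing $\L^*_k(2\ell)<\L^*_{k+1}(2\ell)$ of the simple $\Op^*_{2\ell}$-eigenvalues (item~\ref{it2th2.1} of Theorem~\ref{th2.1}) then forces
\begin{equation*}
\mu_m(\ell)\le\L^*_{2m}(2\ell)<\L^*_{2m+1}(2\ell)\le\mu_{m+1}(\ell),
\end{equation*}
so $\mu_m<\mu_{m+1}$ strictly and every $\mu_m(\ell)$ is simple; identifying $\L_m(\ell):=\mu_m(\ell)$ yields (\ref{2.14}) and (\ref{2.17}). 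Setting $N:=\#\discspec(\Op^*_{2\ell})$, the upper bound produces an eigenvalue below $E_1$ whenever $2m\le N$ (so $\#\discspec(\Op_\ell)\ge[N/2]$), while the lower bound forbids any eigenvalue with $2m-1>N$ (so $\#\discspec(\Op_\ell)\le[N/2]+1$), which is (\ref{2.16}).

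\textbf{Step 2 (parity).} A direct check shows that $T$ is an isometric involution of $\Pi$ which interchanges the two components of $\g_\ell$, so $\g_\ell$ and $\G_\ell$ are $T$-invariant and $T$ commutes with $\Op_\ell$. Consequently $\Op_\ell$ respects the orthogonal decomposition $L_2(\Pi)=L_2^+\oplus L_2^-$ into $T$-even and $T$-odd subspaces. By Step~1 each eigenspace is one-dimensional, and being $T$-invariant it must be spanned by either a $T$-even or a $T$-odd eigenfunction, which is the parity asserted in the lemma.

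\textbf{Main obstacle.} The substantive point is reading the bracketing of Lemma~\ref{lm3.2} with multiplicities correctly, i.e.\ confirming that the $m$-th discrete eigenvalue of the bracketed operator is exactly $\L^*_{2m}(2\ell)$ (resp.\ $\L^*_{2m-1}(2\ell)$) with no pollution from the outer half-strip summand $\Op^+_\ell$. This reduces to verifying that $\Op^+_\ell$ has empty discrete spectrum (by separation of variables on the exterior half-strip, the decaying modes vanishing at $\ga$ are forced to be trivial) and that the reflection--shift--flip construction of Lemma~\ref{lm3.2} is a bijection between odd-in-$x_1$ (respectively even-in-$x_1$) eigenfunctions of $\Op^*_{2\ell}$ and eigenfunctions of the bracketed interior piece. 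The specific sign alternation $+1$ for odd $m$ and $-1$ for even $m$ in (\ref{2.19a}) is not forced by $T$-invariance alone; fixing it most naturally proceeds by continuity from the emergence $\ell\to\ell_m+0$, where the limiting threshold state $\phi_m$ constructed in Theorem~\ref{th2.3} has an explicitly computable $T$-parity that alternates with $m$.
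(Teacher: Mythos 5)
Your proposal is correct and follows essentially the same route as the paper: (\ref{2.16}) and simplicity are read off from the bracketing sandwich (\ref{2.17}) combined with the strict separation of the simple eigenvalues $\L^*_k(2\ell)$, and parity follows from the invariance of $\Pi$, $\g_\ell$, $\G_\ell$ under (\ref{2.7a}) together with simplicity. Your closing remark is also consistent with the paper: the specific sign alternation (\ref{2.19a}) is indeed not part of this lemma and is established separately (the paper does it in Lemma~\ref{lm3.4} by bracketing the even/odd restrictions on $\{x_1=\pm\ell\}$ rather than by your continuity-from-threshold argument).
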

\begin{proof}
The estimates (\ref{2.16}) follow directly from (\ref{2.17}). By (\ref{2.7}) we have $\L_m^*(\ell)\not=\L_{m+1}^*(\ell)$ for all $m$ and $\ell$, and hence the estimates (\ref{2.17}) also imply that $\L_m(\ell)$ are simple for all $m$ and $\ell$. Since the sets $\Pi$, $\g_\ell$, and $\G_\ell$ are symmetric under the transformation (\ref{2.7a}) and all the eigenvalues $\L_m$ are simple, we conclude that all the eigenfunctions of $\Op_\ell$ have certain parity under (\ref{2.7a}). The eigenvalues $\L_m(\ell)$ are non-increasing in $\ell$ by the bracketing arguments \cite[Ch. X\!I\!I\!I, Sec. 15, Prop. 4]{RS}, since as $\ell$ increases the set $\g_\ell$ increases and $\G_\ell$ decreases. The eigenvalues $\L_m(\ell)$ are also holomorphic in $\ell$ since the resolvent of $\Op_\ell$ is holomorphic in $\ell$. This fact is proven by analogy with Lemma~2.2 in \cite{Bor-MSb06} by a simple change of variables
\begin{equation}\label{5.1}
x\mapsto y, \quad y=(y_1,y_2),\quad y_1=x_1-(\ell-\ell_*)\xi_1(x_1),\quad y_2=x_2.
\end{equation}
Here $\ell_*$ is a point at which we prove the holomorphy of $\Op_\ell$ and $\xi_1=\xi_1(x_1)$ is an infinitely differentiable cut-off function equalling $\pm 1$ as $|x_1\mp\ell_*|<\ell_*/3$ and vanishing as $|x_1\mp\ell_*|>2\ell_*/3$.
\end{proof}

\begin{lemma}\label{lm3.4}
The identities (\ref{2.19a}) hold true.
\end{lemma}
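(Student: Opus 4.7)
My plan is to strengthen Lemma~\ref{lm3.3} by a symmetric Dirichlet--Neumann bracketing. Lemma~\ref{lm3.3} already guarantees that each simple eigenvalue $\L_m(\ell)$ has an eigenfunction $\psi_m$ of well-defined parity $\sigma_m\in\{+1,-1\}$ under the involution $T:(x_1,x_2)\mapsto(-x_1,d-x_2)$, so the form $\fh$ decomposes $\Op_\ell$ into its restrictions to the $T$-even and $T$-odd subspaces of $\sii(\Pi)$, with isolated eigenvalues $\L^{\mathrm{e}}_1<\L^{\mathrm{e}}_2<\ldots$ and $\L^{\mathrm{o}}_1<\L^{\mathrm{o}}_2<\ldots$ respectively. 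Identity (\ref{2.19a}) is equivalent to the strict interleaving
\[
\L^{\mathrm{e}}_1<\L^{\mathrm{o}}_1<\L^{\mathrm{e}}_2<\L^{\mathrm{o}}_2<\ldots ,
\]
and I will establish it by bracketing on the $T$-invariant set $\ga\cup T(\ga)$, where $\ga=\{x:x_1=\ell,\ 0<x_2<d\}$ is the internal boundary from Lemma~\ref{lm3.2} and $T(\ga)=\{x:x_1=-\ell,\ 0<x_2<d\}$ is its $T$-image.

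Imposing the Dirichlet condition on $\ga\cup T(\ga)$ restricts $\fh$ and decouples the form into three pieces: mixed problems on the half-strips $\Pi^+_\ell$ and $\Pi^-_\ell$ (no discrete spectrum below $E_1$, since each transverse mode already begins at $E_1$) and the rectangle $(-\ell,\ell)\times(0,d)$ with Dirichlet at $x_1=\pm\ell$ and Neumann at $x_2=0,d$. On this rectangle the eigenvalues below $E_1$ are $k^2\pi^2/(4\ell^2)$, $k\geqslant 1$, with eigenfunctions $\sin\bigl(k\pi(x_1+\ell)/(2\ell)\bigr)$; the identity $\sin\bigl(k\pi(x_1+\ell)/(2\ell)\bigr)=\sin(k\pi/2)\cos(k\pi x_1/(2\ell))+\cos(k\pi/2)\sin(k\pi x_1/(2\ell))$ shows these are $T$-even for odd $k$ and $T$-odd for even $k$. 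Symmetrically, the Neumann bracketing at $\ga\cup T(\ga)$ enlarges $\fh$ and yields the rectangle eigenvalues $k^2\pi^2/(4\ell^2)$, $k\geqslant 0$, with eigenfunctions $\cos\bigl(k\pi(x_1+\ell)/(2\ell)\bigr)$, now $T$-even for even $k$ and $T$-odd for odd $k$. Since $\ga\cup T(\ga)$ is $T$-invariant, both bracketing operators commute with $T$; applying min--max separately on each parity sector will produce
\[
\frac{(2m-2)^2\pi^2}{4\ell^2}\leqslant \L^{\mathrm{e}}_m\leqslant \frac{(2m-1)^2\pi^2}{4\ell^2}\leqslant \L^{\mathrm{o}}_m\leqslant \frac{(2m)^2\pi^2}{4\ell^2}\leqslant \L^{\mathrm{e}}_{m+1}
\]
whenever the relevant eigenvalues exist below $E_1$.

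The two middle inequalities in the chain cannot be equalities: a coincidence $\L^{\mathrm{e}}_m=\L^{\mathrm{o}}_m$ or $\L^{\mathrm{o}}_m=\L^{\mathrm{e}}_{m+1}$ would produce a two-dimensional eigenspace of $\Op_\ell$ containing both a $T$-even and a $T$-odd eigenfunction, contradicting the simplicity already proved in Lemma~\ref{lm3.3}. Hence the interleaving is strict, and comparing with the ordering (\ref{2.14}) identifies $\L_{2k-1}=\L^{\mathrm{e}}_k$ and $\L_{2k}=\L^{\mathrm{o}}_k$, which is exactly (\ref{2.19a}).

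The one point needing care is the choice of bracketing. The single-line bracketing at $\ga$ used in Lemma~\ref{lm3.2} is not $T$-invariant (since $T(\ga)\neq\ga$) and therefore cannot distinguish the two parities. Bracketing simultaneously at $\ga$ and at $T(\ga)$ restores the symmetry, lets each bracketing operator be diagonalized on the $T$-parity sectors, and locks $\L^{\mathrm{e}}_m$ and $\L^{\mathrm{o}}_m$ into two consecutive disjoint intervals; everything else is a routine separation of variables on the middle rectangle combined with the simplicity already in hand.
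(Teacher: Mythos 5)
Your proposal is correct and is essentially the paper's own argument: both decompose $\Op_\ell$ into the even and odd sectors with respect to the symmetry (\ref{2.7a}) and perform Dirichlet--Neumann bracketing on the two lines $x_1=\pm\ell$, reading off from the rectangle modes exactly the parity-resolved two-sided bounds (\ref{3.3}), (\ref{3.4}). The only, immaterial, difference is the final bookkeeping: you conclude via strict interleaving of the sector eigenvalues combined with the simplicity from Lemma~\ref{lm3.3}, whereas the paper localizes each $\L_m(\ell)$ using (\ref{2.7}) and (\ref{2.17}) and then identifies its parity from (\ref{3.3}), (\ref{3.4}).
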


\begin{proof}
We represent the operator $\Op_\ell$ as a direct sum $\Op_\ell=\widetilde{\Op}_\ell^{\mathrm{even}}\oplus \widetilde{\Op}_\ell^{\mathrm{odd}}$, where $\widetilde{\Op}_\ell^{\mathrm{even}}$ is the restriction of $\Op_\ell$ on the functions being even w.r.t. (\ref{2.7a}), while $\widetilde{\Op}_\ell^{\mathrm{odd}}$ is the restriction on the odd functions. Due to Lemma~\ref{lm3.3} all the eigenfunctions $\psi_m$ are also those of either $\widetilde{\Op}_\ell^{\mathrm{even}}$  or $\widetilde{\Op}_\ell^{\mathrm{odd}}$. The same is thus true for the eigenvalues.

Consider the operator $\widetilde{\Op}_\ell^{\mathrm{even}}$. We introduce in $\Pi$ additional boundaries $\{x: x_1=\pm \ell, \ 0<x_2<d\}$ and impose there Neumann or Dirichlet boundary condition. As in the proof of Lemma~\ref{lm3.2}, by bracketing arguments it implies  upper and lower bounds for the eigenvalues $\widetilde{\L}_p^+(\ell)$ of $\widetilde{\Op}_\ell^{\mathrm{even}}$:
\begin{equation}\label{3.3}
\frac{\pi^2 p^2}{\ell^2}\leqslant \widetilde{\L}_p^+(\ell)\leqslant
\frac{\pi^2 (2p+1)^2}{4\ell^2},\quad p=0,1,\ldots
\end{equation}
In the same way one can get similar estimates for the eigenvalues of $\widetilde{\Op}_\ell^{\mathrm{odd}}$:
\begin{equation}\label{3.4}
\frac{\pi^2(2p-1)^2}{4\ell^2}\leqslant \widetilde{\L}_p^-(\ell)
\leqslant \frac{\pi^2 p^2}{\ell^2},\quad p=1,2,\ldots
\end{equation}
It follows from (\ref{2.7}), (\ref{2.17}) that each of the intervals
\begin{equation*}
\left(\frac{\pi^2(m-1)^2}{4\ell^2},\frac{\pi^2 m^2}{\ell^2}\right),\quad m=1,2,\ldots
\end{equation*}
contains exactly one eigenvalue $\L_m(\ell)$. At the same time, by (\ref{3.3}), (\ref{3.4}) the eigenfunctions associated with these eigenvalues are even under (\ref{2.7a}) for odd $m$ and are odd for even $m$. This completes the proof.
\end{proof}

The rest of this section is devoted to the proof of item~\ref{it2th2.2} of Theorem~\ref{th2.2}. The simplicity of $\L_m(\ell)$ is due to Lemma~\ref{lm3.3}. It also yields (\ref{2.14}).

The existence of $\ell_1>0$ such that $\discspec(\Op_\ell)=\emptyset$ for $\ell<\ell_1$ and $\discspec(\Op_\ell)\not=\emptyset$ for $\ell>\ell_1$ was proven in \cite[Th. 3]{DK}. Since the lowest eigenvalue $\L_1(\ell)$ is monotone in $\ell$, there exists a limit $c:=\lim\limits_{\ell\to\ell_1+0} \L_1(\ell)$. If $c<E_1$, the number $c$ should be an isolated eigenvalue of $\Op_{\ell_1}$. At the same time, the discrete spectrum of $\Op_{\ell_1}$ is empty and hence $c=E_1$. Moreover, it follows from (\ref{2.17}) that for $\ell<\ell_1$ the discrete spectrum of $\Op_{2\ell}^*$ contains at most one eigenvalue. Hence, by the item~\ref{it2th2.1} of Theorem~\ref{th2.1} we have
\begin{equation*}
\ell_1\leqslant \frac{1}{2}\ell_2^*.
\end{equation*}

We prove the existence of the other values $\ell_n$, $n\geqslant 2$, and their upper bounds from (\ref{2.15}) by the induction. Suppose we have proven the existence of $\ell_n$, $n\leqslant k-1$, such that for $\ell\leqslant \ell_n$ the operator $\Op_\ell$ has precisely $(n-1)$
isolated eigenvalues, while for $\ell>\ell_n$ it has at least
$n$ isolated eigenvalues. The estimates (\ref{2.17}) and the item~\ref{it2th2.1} of Theorem~\ref{th2.1} imply that for sufficiently large $\ell$, namely, for $\ell>\frac{1}{2}\ell_{2k}^*$ the operator $\Op_\ell$ has at least $k$ isolated eigenvalues. Consider the $k$-th eigenvalue $\L_k(\ell)$ for values $\ell$ when $\L_k(\ell)$ exists. Since $\L_k$ is non-increasing and continuous in $\ell$, if it exists for some $\ell_*$, it exists also for all $\ell\geqslant \ell_*$. Hence, again by the continuity and monotonicity of $\L_k$ we conclude that there exists $\ell_k$ such that
\begin{equation*}
\lim\limits_{\ell\to\ell_k+0} \L_k(\ell)=E_1,
\end{equation*}
and moreover
\begin{equation*}
\ell_{k-1}<\ell_k\leqslant \frac{1}{2} \ell_{2k}^*.
\end{equation*}
Otherwise the operator $\Op_\ell$ would have $k$ isolated eigenvalues for $\ell\leqslant \ell_{k-1}$ which contradicts to the assumption of the induction. As $\ell\leqslant \ell_k$,
the spectrum of $\Op_\ell$ contains precisely $(k-1)$ isolated eigenvalues, while for $\ell>\ell_k$ it has at least $k$ eigenvalues. Therefore, we proven the existence of $\ell_n$ and their upper bounds in (\ref{2.15}). The lower ones follow from the existence of $\ell_n$,  and Lemma~\ref{lm3.3}.

Given any point $\ell_*$, we again employ the change (\ref{5.1}) and see that after the change the resolvent of the operator is holomorphic in a vicinity of $\ell_*$. Hence, all the eigenvalues of $\Op_\ell$ which remain isolated for $\ell=\ell_*$ are holomorphic in a vicinity of $\ell_*$. Theorem~\ref{th2.2} is proven.

\section{Behavior of the resolvent at the threshold of the essential spectrum}

In this section we study the behavior of the resolvent $(\Op_\ell-\l)^{-1}$ as $\l$ is close to the threshold $E_1$ of the essential spectrum $\essspec(\Op_\ell)$. It is the core for studying the behavior of the eigenvalues $\L_m(\ell)$ in a vicinity of the threshold $E_1$ as $\ell\to\ell_m+0$. It will be also a core in studying the case of a small-width regime in \cite{Pap2}, as it was already mentioned in the introduction.

The technique presented  in this section is based on a special analytic continuation of the resolvent $(\Op_\ell-\l)^{-1}$ w.r.t. the spectral parameter and a scheme described in \cite[Ch. X\!V\!I, Sec. 4]{S-P} and originally came from A.~Majda \cite{M74}. Such a combination has already been used successfully in \cite{Bor-MSb06}, \cite{BorExGad02}, \cite{G02} and in this section we extend this technique to our model.

Given $\ell$, we take a fixed number $a>\ell$ and consider $f\in L_2(\Pi_a)$. In what follows all the considered functions from $L_2(\Pi_a)$ are assumed to be extended by zero in $\Pi\setminus\Pi_a$.

Let $u$ be a generalized solution to the boundary value problem
\begin{equation}\label{v3.6}
(-\D-E_1+\mu^2)u=f \quad\text{in}\quad \Pi,\qquad u=0\quad\text{on}\quad\g_\ell,\qquad \frac{\p u}{\p x_2}=0\quad \text{on} \quad \G_\ell,
\end{equation}
behaving at infinity as follows,
\begin{equation}\label{4.0}
u(x)=c_\pm(\mu)\E^{\mp\mu x_1}\chi_1(x)+\Odr(\E^{-\sqrt{E_2-E_1+\mu^2}|x_1|}),\quad x_1\to\pm\infty.
\end{equation}
Here $\mu$ is a small complex number, $c_\pm$ are some constants, and
\begin{equation*}
E_m:=\frac{\pi^2}{d^2}\left(m-\frac{1}{2}\right)^2,\quad
\chi_m(x):=\left\{
\begin{aligned}
&\sqrt{\frac{2}{d}} \sin\sqrt{E_m} x_2, && x_1>0,
\\
&\sqrt{\frac{2}{d}} \sin\sqrt{E_m}(1-x_2), && x_1<0,
\end{aligned}
\right.
\end{equation*}

Let $g$ be a function from $L_2(\Pi_{l})$. We introduce the function $v$ as the solution to the boundary value problem
\begin{equation}\label{v3.6a}
\begin{gathered}
(-\D-E_1+\mu^2)v=g\quad\text{in}\quad\Pi,
\\
v=0\quad\text{on}\quad \g_0\cup\{x: x_1=0, 0<x_2<d\},\qquad \frac{\p v}{\p y_2}=0\quad \text{on}\quad \G_0.
\end{gathered}
\end{equation}
The solution is given by the formulas
\begin{align*}
&v(x,\mu)=\sum\limits_{m=1}^{\infty} V_m(x,\mu)\chi_n(x),
\\
&V_m(x,\mu)=\left\{
\begin{aligned}
& \int\limits_0^{+\infty} \frac{\E^{-k_m(\mu)|x_1-t_1|}-\E^{-k_m(\mu)(x_1+t_1)}}{2k_m(\mu)} g_m^{(+)}(t_1)\di t_1,\quad x_1>0,
\\
&  \int\limits_{-\infty}^0 \frac{\E^{-k_m(\mu)|x_1-t_1|}-\E^{k_m(\mu)(x_1+t_1)}}{2 k_m(\mu)} g_m^{(-)}(t_1)\di t_1,\quad x_1<0,
\end{aligned}
\right.
\\
&g_m^{(\pm)}(y_1):=\int\limits_0^1 \chi_m(x) g(x)\di x_2,\quad \pm x_1>0,
\\
&k_1(\mu):=\mu,\quad k_m(\mu):=\sqrt{E_m-E_1-\mu^2},\quad m\geqslant 2,
\end{align*}
where the branch of the square root is fixed by the requirement $\sqrt{1}=1$.  We indicate the mapping $g\mapsto v$ by $\cc{T}_1(\mu)v$.

\begin{lemma}\label{vlm3.1}
The mapping $\cc{T}_1(\mu)$ is a bounded linear operator from $L_2(\Pi_a)$ into
\begin{equation*}
\Big(\H^2(\Pi_a\cap\{x: x_1>0\})\oplus \H^2(\Pi_a\cap\{x: x_1<0\})\Big)\cap \H^1(\Pi_a).
\end{equation*}
The operator $\cc{T}_1(\mu)$ is holomorphic w.r.t. sufficiently small complex $\mu$. As $\mu=0$, the operator $\cc{T}_1(0)$ acts as follows
\begin{align}
&(\cc{T}_1(0)g)(x)=\sum\limits_{m=1}^{\infty} V_m(x,0)\chi_n(x), \nonumber 
\\
&V_1(x,0)=-\left\{
\begin{aligned}
& \int\limits_0^{+\infty} \frac{|x_1-t_1|-(x_1+t_1)}{2} g_1^{(+)}(t_1)\di t_1,\quad x_1>0,
\\
&  \int\limits_{-\infty}^0 \frac{|x_1-t_1|+(x_1+t_1)}{2} g_1^{(-)}(t_1)\di t_1,\quad x_1<0.
\end{aligned}
\right.\nonumber
\end{align}
\end{lemma}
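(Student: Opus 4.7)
The plan is to decompose the problem (\ref{v3.6a}) in the orthonormal basis $\{\chi_m(x_2)\}_{m\geqslant 1}$ of the transverse cross-section, solve the resulting one-dimensional Dirichlet problems on the two half-lines explicitly via their Green's functions, and then control the series uniformly in $\mu$. Substituting the ansatz $v(x,\mu)=\sum_m V_m(x_1,\mu)\chi_m(x_2)$ into (\ref{v3.6a}) and using $-\chi_m''=E_m\chi_m$ together with the boundary conditions on $x_2=0,d$ built into the $\chi_m$, one obtains on each half-line $\{\pm x_1>0\}$ the problem $-V_m''+k_m^2(\mu)V_m=g_m^{(\pm)}$ with $V_m(0)=0$ and decay at infinity. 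The displayed formulas are exactly the Green's function representations of these ODEs, so the series formally solves (\ref{v3.6a}), and it remains to justify convergence, regularity, and holomorphy.

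For $m\geqslant 2$, the quantity $k_m(\mu)=\sqrt{E_m-E_1-\mu^2}$ stays uniformly bounded away from zero and is holomorphic in a complex neighborhood of $\mu=0$. Standard one-dimensional energy estimates for the ODE yield $\|V_m''\|_{L_2}\leqslant 2\|g_m^{(\pm)}\|_{L_2}$, $\|V_m'\|_{L_2}\leqslant k_m^{-1}\|g_m^{(\pm)}\|_{L_2}$ and $\|V_m\|_{L_2}\leqslant k_m^{-2}\|g_m^{(\pm)}\|_{L_2}$. Weighting these by the $x_2$-derivatives of $\chi_m$, which contribute factors $E_m^{j}$ for the $j$-th derivative, and using $k_m^2\sim E_m$ as $m\to\infty$, the tail $m\geqslant 2$ sums to a convergent $\H^2$-bound on each half-strip; each partial sum is manifestly holomorphic in $\mu$, so the limit is as well.

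The principal obstacle is the mode $m=1$, where $k_1(\mu)=\mu$ vanishes at $\mu=0$ and the Green's kernel is formally singular. I would resolve this using the elementary identity
\[
\frac{\E^{-\mu\alpha}-\E^{-\mu\beta}}{2\mu}=\frac{1}{2}\int_\alpha^\beta \E^{-\mu s}\di s,\qquad 0\leqslant\alpha\leqslant\beta,
\]
applied with $\alpha=|x_1-t_1|$ and $\beta=x_1+t_1$. It shows that the kernel appearing in the representation of $V_1$ is in fact entire in $\mu$, and since $g_1^{(\pm)}$ is compactly supported in $\Pi_a$ by the standing extension-by-zero convention, this produces holomorphy of $V_1(\cdot,\mu)$ near $\mu=0$ together with uniform $\H^2$-bounds on $\Pi_a\cap\{\pm x_1>0\}$ dominated by $\|g_1^{(\pm)}\|_{L_2}$. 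Setting $\mu=0$ in the identity yields $\tfrac{1}{2}\bigl((x_1+t_1)-|x_1-t_1|\bigr)$, which is exactly the stated formula for $V_1(x,0)$.

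Finally, the global regularity is $\H^1(\Pi_a)$ rather than $\H^2(\Pi_a)$: the two half-strip pieces share the common Dirichlet trace $v=0$ on $\{x_1=0,\ 0<x_2<d\}$, so the traces agree and $v\in\H^1(\Pi_a)$, while the normal derivatives $\p_{x_1}v$ from the two sides may jump across this segment, which is exactly what obstructs global $\H^2$-regularity and accounts for the direct-sum structure in the target space.
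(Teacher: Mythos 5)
Your proposal is correct and follows essentially the same route as the paper, whose proof simply defers to Lemma~3.1 of \cite{Bor-MSb06}: a transverse Fourier decomposition in the basis $\{\chi_m\}$, explicit half-line Green's kernels for each mode, uniform-in-$m$ ODE estimates for the tail, and a separate treatment of the $m=1$ kernel (your integral identity indeed shows it is entire in $\mu$ and reproduces the stated $V_1(x,0)$), with the Dirichlet matching on $\{x_1=0\}$ explaining why only $\H^1(\Pi_a)$ holds globally. The only slip is bookkeeping: the $j$-th $x_2$-derivative of $\chi_m$ contributes a factor $E_m^{j/2}$, not $E_m^{j}$, and with this correction your bounds $\|V_m\|\leqslant k_m^{-2}\|g_m^{(\pm)}\|$, $\|V_m'\|\leqslant k_m^{-1}\|g_m^{(\pm)}\|$, $\|V_m''\|\leqslant 2\|g_m^{(\pm)}\|$ together with $k_m^2\sim E_m$ do close the $\H^2$ estimate as claimed.
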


The proof of this lemma repeats word-by-word that of Lemma~3.1 in \cite{Bor-MSb06}. The only minor change is the basis for the Fourier series on the cross-section of $\Pi$ which is $\{\chi_m\}_{m=1}^\infty$ in our case.

Let $v:=\cc{T}_1(\mu)g$ and consider one more boundary value problem
\begin{equation}\label{v3.9}
(-\D+1) w=(-\D+1)v\quad\text{in}\quad \Pi_a,\qquad w=v\quad\text{on}\quad \g_{\ell,a},\qquad \frac{\p w}{\p y_2}=0\quad \text{on}\quad\G_{\ell,a},
\end{equation}
where
\begin{align*}
&\g_{\ell,a}:=(\g_\ell\cap\p\Pi_a)\cup\g_+\cup\g_-,\quad
\G_{\ell,a}:=\G_\ell\cap\p\Pi_a,
\\
&\g_\pm=\{x: x_1=\pm a, 0<x_2<d\}.
\end{align*}
Since the function $v$ does not belong to $\H^2(\Pi_a)$, but it is in $\H^2(\Pi_a\cap \{x: x_1>0\})$ and
$\H^2(\Pi_a\cap \{x: x_1<0\})$, the right hand side in (\ref{v3.9}) is treated pointwise. Namely, we consider  the function $(-\D+1)v$ separately for $x_1<0$ and $x_1>0$.  Under such definition it belongs to $L_2(\Pi_a)$.

The problem (\ref{v3.9}) is obviously solvable in $\H^1(\Pi_a)$. We denote the mapping $g\mapsto w$ as $\cc{T}_2(\mu)$.
By $\Xi$ we denote the set of all bounded subdomains of $\Pi$ separated from the points $(\ell,0)$ and $(-\ell,d)$ and having $C^2$-boundaries.

\begin{lemma}\label{vlm3.2}
Let $Q\in \Xi$. The mapping $\cc{T}_2(\mu)$ is a linear bounded operator from $L_2(\Pi_a)$ into $\H^1(\Pi_a)\cap\H^2(Q)$. This operator is holomorphic in $\mu$.
\end{lemma}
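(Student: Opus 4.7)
The plan is threefold: solve the mixed problem (\ref{v3.9}) in $\H^1(\Pi_a)$ by a variational argument, upgrade to $\H^2(Q)$ on any $Q\in\Xi$ via elliptic regularity, and inherit holomorphy in $\mu$ directly from Lemma~\ref{vlm3.1}.

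First, Lemma~\ref{vlm3.1} ensures that $v:=\cc{T}_1(\mu)g$ lies in $\H^1(\Pi_a)$, so the Dirichlet datum $v|_{\g_{\ell,a}}$ is a legitimate $\H^{1/2}$-trace, while $(-\D+1)v$ belongs to $L_2(\Pi_a)$ in the piecewise sense specified before Lemma~\ref{vlm3.1}. Setting $\widetilde w:=w-v$, I look for $\widetilde w\in\Ho^1(\Pi_a,\g_{\ell,a})$ solving
\begin{equation*}
\int_{\Pi_a}\bigl(\nabla \widetilde w\cdot\nabla\overline\eta+\widetilde w\,\overline\eta\bigr)\di x=\int_{\Pi_a}(-\D+1)v\cdot\overline\eta\di x-\int_{\Pi_a}\bigl(\nabla v\cdot\nabla\overline\eta+v\,\overline\eta\bigr)\di x
\end{equation*}
for every $\eta\in\Ho^1(\Pi_a,\g_{\ell,a})$. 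The left-hand side is coercive on this space owing to the additive $+1$, so Lax-Milgram supplies a unique $\widetilde w$. Testing with $\eta=\widetilde w$ and applying Lemma~\ref{vlm3.1} yields
\begin{equation*}
\|w\|_{\H^1(\Pi_a)}\leqslant C\bigl(\|(-\D+1)v\|_{L_2(\Pi_a)}+\|v\|_{\H^1(\Pi_a)}\bigr)\leqslant C_1\|g\|_{L_2(\Pi_a)}.
\end{equation*}

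Second, the identity $(-\D+1)w=(-\D+1)v$ holds as a global $L_2(\Pi_a)$-equation, and each $Q\in\Xi$, being $C^2$ and separated from the Dirichlet--Neumann transition points $(\ell,0)$ and $(-\ell,d)$, has its boundary meet $\p\Pi$ only on portions carrying a single type of boundary condition. Standard interior and up-to-boundary $\H^2$-regularity for elliptic operators under a single type of smooth boundary condition therefore gives $w\in\H^2(Q)$ with norm controlled by $\|w\|_{\H^1(\Pi_a)}+\|(-\D+1)v\|_{L_2(\Pi_a)}\leqslant C_2\|g\|_{L_2(\Pi_a)}$. As for holomorphy, the solution operator of (\ref{v3.9}) is linear, bounded, and completely $\mu$-independent (the equation uses $-\D+1$, not $-\D+\mu^2$), while by Lemma~\ref{vlm3.1} the inputs $(-\D+1)v$ and $v|_{\g_{\ell,a}}$ depend holomorphically on $\mu$; thus $\cc{T}_2(\mu)$ is the composition of a fixed bounded linear map with a holomorphic $\mu$-family and is itself holomorphic.

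The principal subtlety, though not a genuine obstacle, is that $v$ is only piecewise $\H^2$ across the seam $\{x_1=0\}$: one might fear that for $Q\in\Xi$ crossing this seam the conclusion $w\in\H^2(Q)$ could fail. It does not, because the $\H^2$-regularity of $w$ depends only on the global $L_2$-norm of $(-\D+1)w$, which is finite even though the classical second derivatives of $v$ are not; the derivative jump of $v$ is absorbed into the $L_2$-representation of the right-hand side and leaves no trace in $w$.
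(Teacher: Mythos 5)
Your proposal is correct and follows essentially the same route as the paper: unique solvability of (\ref{v3.9}) in $\H^1(\Pi_a)$, a smoothness-improving (elliptic regularity) step giving $\H^2(Q)$ for $Q\in\Xi$ away from the points $(\ell,0)$, $(-\ell,d)$, and holomorphy of $\cc{T}_2(\mu)$ obtained by composing the $\mu$-independent solution operator of (\ref{v3.9}) with the holomorphic family $\cc{T}_1(\mu)$ from Lemma~\ref{vlm3.1}. The only cosmetic difference is that you get solvability from coercivity via Lax--Milgram, while the paper invokes the compact embedding $\H^1(\Pi_a)\subset L_2(\Pi_a)$; your explicit treatment of the seam $\{x_1=0\}$ (the piecewise right-hand side being globally $L_2$) is a welcome clarification of what the paper leaves implicit.
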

\begin{proof}
The set $\Pi_a$ possesses the cone property and by Rellich-Kondrashov theorem \cite[Ch. V\!I, Thm. 6.2]{Ad}
the embedding $\H^1(\Pi_a)\subset L_2(\Pi_a)$ is compact. Thus, the problem (\ref{v3.9})  is uniquely solvable in $\H^1(\Pi_a)$. By the standard smoothness improving theorems the mapping $v\mapsto w$ is a bounded linear operator from
\begin{equation*}
\Big(\H^2(\Pi_a\cap\{x: x_1>0\})\oplus \H^2(\Pi_a\cap\{x: x_1<0\})\Big)\cap \H^1(\Pi_a).
\end{equation*}
into $\H^1(\Pi_a)\cap\H^2(Q)$. The composition of this operator with $\cc{T}_1(\mu)$ is exactly the operator $\cc{T}_2(\mu)$. In view of Lemma~\ref{vlm3.1} it completes the proof.
\end{proof}

The main idea of the aforementioned scheme borrowed from \cite{S-P}, \cite{M74} is to construct $u$ as
\begin{equation}\label{v3.10}
u(x,\mu)=\cc{T}_3(\mu)g:=\xi_2(x_1) v(x,\mu)+\big(1-\xi_2(x_1)\big) w(x,\mu),
\end{equation}
where $\xi_2=\xi_2(x_1)$ is an infinitely differentiable cut-off function equalling one as $|x_1|>\frac{1}{3}l+\frac{2}{3}a$ and vanishing as $|y_1|<\frac{2}{3}l+\frac{1}{3}a$.

We shall also need to know certain properties of the operator $\cc{T}_3(\mu)$.  All of them are collected in
\begin{lemma}\label{vlm3.4}
Let $b$ be a positive fixed number. The mapping $\cc{T}_3(\mu)$ is a bounded linear operator from $L_2(\Pi_a)$ into $\H^1(\Pi_b)$ and is holomorphic w.r.t. small complex $\mu$.
\end{lemma}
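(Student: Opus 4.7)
The plan is to control the two summands in the definition $\cc{T}_3(\mu)g = \xi_2 v + (1-\xi_2) w$ separately in $\H^1(\Pi_b)$ and to read off holomorphy from composition of holomorphic operator-valued maps. Note first that the cut-off $\xi_2$ is smooth, depends only on $x_1$, and is independent of $\mu$. Because $\ell < a$, the support of $1-\xi_2$ is compactly contained in $\Pi_a$, while $\xi_2$ itself vanishes in a neighbourhood of the segment $\{x_1 = 0\}$ as well as of the singular points $(\ell,0)$ and $(-\ell,d)$.

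For the second summand, $(1-\xi_2)w$ extends by zero from $\Pi_a$ to the whole strip $\Pi$, and the multiplier $1-\xi_2$ maps $\H^1(\Pi_a)$ continuously into $\H^1(\Pi)$, so Lemma~\ref{vlm3.2} at once yields an $\H^1(\Pi_b)$-bound of the form $C\|g\|_{L_2(\Pi_a)}$ holomorphic in $\mu$. For the first summand, Lemma~\ref{vlm3.1} directly controls $\xi_2 v$ on $\Pi_a \cap \Pi_b$: the vanishing of $\xi_2$ near $x_1=0$ means one never crosses the seam across which $\cc{T}_1(\mu)g$ was only asserted to belong to the broken space $\H^2(\Pi_a \cap \{x_1>0\}) \oplus \H^2(\Pi_a \cap \{x_1<0\})$, and $\H^1(\Pi_a)$-regularity on the whole of $\Pi_a$ is enough for our purposes. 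If $b > a$, the only remaining piece to control is $v$ on $\Pi_b \setminus \Pi_a$. For this I would argue directly from the series representation of Lemma~\ref{vlm3.1}: since $g$ is supported in $\Pi_a$, for $|x_1| > a$ each Fourier coefficient $V_m(\cdot,\mu)$ is the convolution of $g_m^{(\pm)}$ with a kernel holomorphic in $\mu$. For $m\ge 2$ the quantity $\RE k_m(\mu)$ is bounded below by a positive constant uniformly in small $\mu$, which yields exponential decay in $x_1$ and an $\H^1$-bound for $V_m\chi_m$ that is summable in $m$. The $m=1$ mode reduces, at $\mu=0$, to a kernel that is affine in $x_1$ in the far field (in fact constant for $x_1 > a$), hence uniformly bounded on the bounded set $\Pi_b\setminus\Pi_a$, and it depends holomorphically on $\mu$.

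Once both terms are in $\H^1(\Pi_b)$ with bounds linear in $\|g\|_{L_2(\Pi_a)}$, holomorphy of $\cc{T}_3(\mu)$ as a map $L_2(\Pi_a) \to \H^1(\Pi_b)$ is automatic: multiplication by the $\mu$-independent smooth functions $\xi_2$ and $1-\xi_2$ is a bounded linear operation into $\H^1(\Pi_b)$, and the composition of a holomorphic operator-valued map with a bounded linear operator is again holomorphic. I expect the only non-routine step to be the far-field $\H^1$-estimate on $\Pi_b\setminus\Pi_a$, which is precisely the place where one must step out of the abstract framework of Lemma~\ref{vlm3.1} and work with the explicit Fourier series; once the source $g$ is known to vanish there, this reduces to a standard exponential-decay estimate for a rapidly convergent series with holomorphically $\mu$-dependent kernels.
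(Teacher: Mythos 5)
Your argument is correct and coincides with the paper's: the paper proves this lemma in a single line, observing that it follows directly from Lemmas~\ref{vlm3.1} and~\ref{vlm3.2} together with the definition (\ref{v3.10}) of $\cc{T}_3$, which is exactly your decomposition into $\xi_2 v$ and $(1-\xi_2)w$. Your additional far-field estimate of $v$ on $\Pi_b\setminus\Pi_a$ via the explicit Fourier series (uniform exponential decay for $m\geqslant 2$, the bounded and holomorphic $m=1$ mode) merely makes explicit a detail the paper leaves implicit in the case $b>a$.
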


The statement of this lemma follows directly from Lemmas~\ref{vlm3.1},~\ref{vlm3.2} and the definition (\ref{v3.11}) of $\cc{T}_3$.

The function $\cc{T}_3(\mu)g$ satisfies the boundary condition in (\ref{v3.6}). Hence, to be a solution to (\ref{v3.6}), (\ref{4.0}) the function $\cc{T}_3(\mu)g$ should satisfy only the equation in (\ref{v3.6}). We substitute $\cc{T}_3(\mu)g$ into this equation and employ that in (\ref{v3.6a}), (\ref{v3.9}) for $v$ and $w$,
\begin{align}
&(-\D-E_1+\mu^2) \cc{T}_3(\mu)g= (-\D-E_1+\mu^2)(\xi_2 v+(1-\xi_2)w)\nonumber
\\
&\hphantom{(-\D-E_1+\mu^2) \cc{T}_3(\mu)g}=g+\cc{T}_4(\mu)g,\nonumber
\\
&\cc{T}_4(\mu)g:=2\nabla\xi_2\cdot\nabla (w-v)+(w-v) (-\D-E_1+\mu^2-1)(1-\xi_2).
\label{v3.11}
\end{align}
Thus, the function $g$ should be a solution to
\begin{equation}\label{v3.12}
g+\cc{T}_4(\mu)g=f.
\end{equation}
This equation is considered in $L_2(\Pi_a)$ and is equivalent to the problem (\ref{v3.6}), (\ref{4.0}). This fact is stated by
\begin{lemma}\label{vlm3.3}
The operator $\cc{T}_4(\mu)$ is a compact linear one in $L_2(\Pi_a)$ and is holomorphic w.r.t. small complex $\mu$. The equation (\ref{v3.12}) is equivalent to the problem (\ref{v3.6}), (\ref{4.0}) for small complex $\mu$ in the following sense. For each function $f\in L_2(\Pi_a)$, the solution $g$ to (\ref{v3.12}) generates the solution to (\ref{v3.6}), (\ref{4.0}) by the formula $u=\cc{T}_3(\mu)g$. And vice versa, given a solution $u$ to (\ref{v3.6}), (\ref{4.0}), there exists a solution $g$ to (\ref{v3.12}) such that $u=T_3(\mu)g$.
\end{lemma}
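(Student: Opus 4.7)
My plan has three stages: compactness and holomorphy of $\cc{T}_4(\mu)$, the forward direction of the equivalence, and the reverse direction. For compactness, inspection of (\ref{v3.11}) shows that $\cc{T}_4(\mu)g$ splits as a sum of two terms with bounded support. The term $2\nabla\xi_2\cdot\nabla(w-v)$ is supported in the transition zone $Q_t\subset\{\tfrac{2}{3}\ell+\tfrac{1}{3}a<|x_1|<\tfrac{1}{3}\ell+\tfrac{2}{3}a\}$. Since $\tfrac{2}{3}\ell+\tfrac{1}{3}a>\ell$, the zone $Q_t$ is separated from $\{x_1=0\}$ and from the corner points $(\ell,0)$ and $(-\ell,d)$, so Lemma~\ref{vlm3.1} yields $v\in\H^2(Q_t)$ and Lemma~\ref{vlm3.2} (with $Q_t\in\Xi$) gives $w\in\H^2(Q_t)$; hence this first term lies in $\H^1(Q_t)$. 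The second term $(w-v)(-\D-E_1+\mu^2-1)(1-\xi_2)$ is supported in the bounded set $Q_i:=\{|x_1|<\tfrac{1}{3}\ell+\tfrac{2}{3}a\}\cap\Pi$, where $w-v\in\H^1$, and multiplied by the smooth bounded factor the term remains in $\H^1(Q_i)$. Thus $\cc{T}_4(\mu)$ factors as $L_2(\Pi_a)\to \H^1(Q)\hookrightarrow L_2(\Pi_a)$ with $Q=Q_t\cup Q_i$ bounded, the second arrow being compact by Rellich-Kondrashov. Holomorphy of $\cc{T}_4(\mu)$ in $\mu$ is then immediate from the polynomial dependence on $\mu^2$ in (\ref{v3.11}) and the holomorphy statements in Lemmas~\ref{vlm3.1},~\ref{vlm3.2}.

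For the forward direction, let $g$ solve (\ref{v3.12}) and set $u:=\cc{T}_3(\mu)g=\xi_2 v+(1-\xi_2)w$. A direct computation distributing $(-\D-E_1+\mu^2)$ across the product decomposition, using (\ref{v3.6a}) for $v$ and (\ref{v3.9}) for $w$, reproduces the identity (\ref{v3.11}) and yields $(-\D-E_1+\mu^2)u=g+\cc{T}_4(\mu)g=f$. The Dirichlet condition $u=0$ on $\g_\ell$ follows because on the support of $\xi_2$ one has $|x_1|>\ell$ where $v$ vanishes on $\g_0\supset\g_\ell$, while on the support of $1-\xi_2$ one has $u=w$ which vanishes on $\g_\ell\cap\p\Pi_a$; the Neumann case on $\G_\ell$ is symmetric. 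The asymptotic (\ref{4.0}) is inherited from the explicit form of $V_m$ in Lemma~\ref{vlm3.1}: for $|x_1|>a$ the fact that $g$ is supported in $\Pi_a$ gives $V_1(x,\mu)=c_\pm(\mu)\E^{\mp\mu x_1}$ and $V_m(x,\mu)=\Odr(\E^{-k_m(\mu)|x_1|})$ for $m\geqslant 2$.

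The reverse direction is the main obstacle. Given $u$ solving (\ref{v3.6}), (\ref{4.0}) with right-hand side $f$, I would invoke the Fredholm alternative for $I+\cc{T}_4(\mu)$, valid by the compactness established above, together with a uniqueness argument for (\ref{v3.6}), (\ref{4.0}). For small $\mu$ with $\RE\mu>0$ outside a discrete exceptional set, any nontrivial solution of the homogeneous version of (\ref{v3.6}), (\ref{4.0}) would be an $L_2$ eigenfunction of the self-adjoint $\Op_\ell$ at $E_1-\mu^2<E_1$, which is impossible for generic $\mu$; this forces both the homogeneous problem and $\ker(I+\cc{T}_4(\mu))$ to be trivial. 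Setting $g:=(I+\cc{T}_4(\mu))^{-1}f$ and $\tilde{u}:=\cc{T}_3(\mu)g$, the forward direction shows $\tilde{u}$ solves the same problem as $u$, and uniqueness forces $\tilde{u}=u$. The remaining exceptional values of $\mu$ are handled by holomorphic Fredholm theory applied to the family $\mu\mapsto I+\cc{T}_4(\mu)$, extending the correspondence to all small complex $\mu$.
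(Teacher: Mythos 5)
Your treatment of the compactness and holomorphy of $\cc{T}_4(\mu)$ and of the forward implication is sound and essentially the paper's own argument: both rest on the mapping properties in Lemmas~\ref{vlm3.1},~\ref{vlm3.2} plus a compact Sobolev embedding over a bounded region, and on the computation behind (\ref{v3.11}). (Your splitting into the transition zone and the inner region is, if anything, a bit more careful than the paper's wording, since the zero-order part of the second term in (\ref{v3.11}) is supported wherever $1-\xi_2\neq 0$, not only where $\xi_2$ is non-constant.)

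The reverse implication is where the proposal has a genuine gap, and it is exactly the case for which the lemma is needed. First, even at ``generic'' $\mu$ there is an unproved step: from triviality of the homogeneous problem (\ref{v3.6}), (\ref{4.0}) you infer triviality of $\ker(\I+\cc{T}_4(\mu))$, but a nonzero $g$ with $g+\cc{T}_4(\mu)g=0$ only tells you that $u=\cc{T}_3(\mu)g$ solves the homogeneous problem; to conclude $g=0$ from $u=0$ you need injectivity of $\cc{T}_3(\mu)$ on this kernel, which you never establish (also, for small real $\mu>0$ the number $E_1-\mu^2$ can genuinely be a discrete eigenvalue of $\Op_\ell$ --- that is the very phenomenon under study --- so the genericity claim needs the non-real $\mu$, self-adjointness argument to be made precise). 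Second, and more seriously, the lemma must hold for \emph{all} small complex $\mu$, in particular at $\mu=0$ in the degenerate situation where $\I+\cc{T}_4(0)$ is \emph{not} invertible: the statement is invoked precisely there, to pass from the nontrivial bounded solution $\phi$ to a nontrivial solution of (\ref{v3.12}) (the criterion for critical $\ell$, Lemmas~\ref{lm4.2},~\ref{lm4.7}) and to justify the reduction of the eigenvalue problem to (\ref{5.4}) for $\mu$ near $0$. Meromorphic Fredholm theory (Lemma~\ref{lm4.1}) yields poles of $(\I+\cc{T}_4(\mu))^{-1}$, but it does not ``extend the correspondence'' through a pole: at such $\mu$ the equation (\ref{v3.12}) may be unsolvable for a given $f$, or have a whole family of solutions, and your prescription $g=(\I+\cc{T}_4(\mu))^{-1}f$ simply does not produce $g$. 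The paper avoids any invertibility assumption by an explicit construction: given $u$, it solves the auxiliary problem $(-\D+1)U=0$ in $\Pi_a$ with mixed boundary data built from $u$, sets $v:=u-(1-\xi_2)U$, $w:=u+\xi_2U$ and $g:=(-\D-E_1+\mu^2)v$ (taken separately for $x_1>0$ and $x_1<0$), and checks directly that $v$, $w$ satisfy (\ref{v3.6a}), (\ref{v3.9}), so that $g$ solves (\ref{v3.12}) and $u=\cc{T}_3(\mu)g$. Some construction of this kind, valid uniformly in small $\mu$ and independent of any Fredholm alternative, is what your proof is missing.
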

\begin{proof}
Since the function $\xi_2$ is non-constant only for
\begin{equation*}
\ell<\frac{2\ell+a}{3}<|x_1|<\frac{\ell+2a}{3},
\end{equation*}
it follows from the definition (\ref{v3.11}) of $\cc{T}_4$ that this operator considered as depending on $(w-v)$ is a bounded holomorphic in $\mu$ mapping from $\H^1(\Pi_{(\ell+2a)/3})\setminus\Pi_{(2\ell+a)/3})$ into $L_2(\Pi_a)$. Lemmas~\ref{vlm3.1},~\ref{vlm3.2} is a linear bounded operator from $L_2(\Pi_a)$ into $\H^2(\Pi_{(\ell+2a)/3})\setminus\Pi_{(2\ell+a)/3})$. Hence, due to the compact embedding of $\H^2(\Pi_{(\ell+2a)/3})\setminus\Pi_{(2\ell+a)/3})$
into $\H^1(\Pi_{(\ell+2a)/3})\setminus\Pi_{(2\ell+a)/3})$ the operator $\cc{T}_4$ is compact in $L_2(\Pi_a)$ and holomorphic in small complex $\mu$.

Given $f$, we take a solution $g$ to (\ref{v3.12}) and construct the associated one to (\ref{v3.6}), (\ref{4.0}) as $u=\cc{T}_3(\mu)g$.

Let us show how to determine the solution to (\ref{v3.12}) once we know that of (\ref{v3.6}), (\ref{4.0}).
We do it by analogy with Propositions~3.1,~3.2 in \cite{BorExGad02}. Given $f$ and the associated solution to (\ref{v3.6}), (\ref{4.0}), consider the boundary value problem
\begin{align*}
&(-\D+1)U=0\quad\text{in}\quad\Pi_a,
\\
&\frac{\p U}{\p x_2}=0\quad\text{on}\quad\G_{0,a},\qquad U=0\quad\text{on}\quad\g_{\ell,a},
\\
&U=u\quad\text{on}\quad \big(\G_{\ell,a}\setminus\overline{\G}_{0,a}\big)\cup\{x: x_1=0, 0<x_2<d\}.
\end{align*}
This problem is uniquely solvable in $\H^1(\Pi_a)$. We define $v:=u-(1-\xi_2) U$. It is easy to see that this function belongs to
\begin{equation*}
\H^1(\Pi_a)\cap \big(\H^2(\Pi_a\cap\{x: x_1<0\})\oplus \H^2(\Pi_a\cap\{x: x_1>0\})\big)
\end{equation*}
and solves the boundary value problem (\ref{v3.6a}) with $g:=(-\D-E_1+\mu^2)v$. Here the function $g$ is defined separately for $x_1>0$ and $x_1<0$.

The introduced function $v$ is exactly the sought solution to the equation (\ref{v3.12}); the corresponding function $w$ is $w:=u+\xi_2 U$. These facts and the boundary value problem (\ref{v3.9}) for $w$ are checked by direct calculations.
\end{proof}

Since the operator $\cc{T}_4$ is compact, we apply the Fredholm alternatives to (\ref{v3.12}) and by Lemma~\ref{vlm3.3} it describes the solvability of (\ref{v3.6}), (\ref{4.0}). As $\RE\mu>0$, the solution to (\ref{v3.6}), (\ref{4.0}) decays exponentially as $x_1\to\pm\infty$ and belongs to $\H^1(\Pi)$. In this case we have
\begin{equation*}
\left(\Op_\ell -E_1+\mu^2\right)^{-1}f=\cc{T}_3(\mu)(\I+\cc{T}_4(\mu))^{-1}f.
\end{equation*}
In view of this identity it is natural to treat the operator $\cc{T}_3(\mu)(\I+\cc{T}_4(\mu))^{-1}$ as an analytic in $\mu$ continuation of $\left(\Op_\ell -E_1+\mu^2\right)^{-1}$ considered on $L_2(\Pi_a)$.

Completely by analogy with Lemmas~3.4,~3.5 in \cite{Bor-MSb06} we prove

\begin{lemma}\label{lm4.1}
For sufficiently small $\mu$ the operator $(\I+\cc{T}_4(\mu))^{-1}$ is well-defined in $L_2(\Pi_a)$. It is bounded and meromorphic w.r.t. $\mu$.
\end{lemma}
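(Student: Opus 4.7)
The plan is to apply the analytic Fredholm theorem to the family $\I+\cc{T}_4(\mu)$. Lemma~\ref{vlm3.3} already guarantees that $\cc{T}_4(\mu)$ is a compact operator in $L_2(\Pi_a)$ depending holomorphically on $\mu$ in some disc $|\mu|<\d_0$. So it will suffice to exhibit a single point $\mu_0$ in that disc at which $\I+\cc{T}_4(\mu_0)$ is boundedly invertible; the analytic Fredholm theorem will then produce $(\I+\cc{T}_4(\mu))^{-1}$ as an operator-valued meromorphic function of $\mu$, with poles of finite order and bounded wherever it is regular.

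To locate such a $\mu_0$ I would appeal to Theorem~\ref{th2.2}: the eigenvalues $\L_m(\ell)$ below $E_1$ form a finite set, so one can choose a small real $\mu_0>0$ with $E_1-\mu_0^2\notin\spec(\Op_\ell)$. For this $\mu_0$ the self-adjoint resolvent $(\Op_\ell-E_1+\mu_0^2)^{-1}$ is a bounded operator from $L_2(\Pi)$ into $\Dom(\Op_\ell)\subset\Ho^1(\Pi,\g_\ell)$. Applied to any $f\in L_2(\Pi_a)$, extended by zero to $\Pi$, it produces an $\H^1$-solution of (\ref{v3.6}) which, by separation of variables on each semi-strip $\{\pm x_1>a\}$ (where $f$ vanishes), admits an expansion of the form (\ref{4.0}). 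Hence problem (\ref{v3.6})--(\ref{4.0}) is uniquely solvable for every $f\in L_2(\Pi_a)$, and the full operator equation (\ref{v3.12}) has at least the \emph{existence} half of its Fredholm alternative at $\mu_0$.

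For the injectivity half, I would assume $(\I+\cc{T}_4(\mu_0))g=0$. By Lemma~\ref{vlm3.3} the function $u:=\cc{T}_3(\mu_0)g$ solves (\ref{v3.6}), (\ref{4.0}) with $f=0$; since $\RE\mu_0>0$ the asymptotic terms in (\ref{4.0}) decay exponentially, so $u\in\H^1(\Pi)$ and therefore $u\in\Dom(\Op_\ell)$ with $\Op_\ell u=(E_1-\mu_0^2)u$. The choice of $\mu_0$ forbids $E_1-\mu_0^2$ from being an eigenvalue, hence $u\equiv 0$; the explicit recovery $u\mapsto g$ from the second half of the proof of Lemma~\ref{vlm3.3} (via the auxiliary harmonic extension $U$, then $v=u-(1-\xi_2)U$ and $g=(-\D-E_1+\mu_0^2)v$) then forces $g=0$. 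Thus $\I+\cc{T}_4(\mu_0)$ is injective and, being a compact perturbation of the identity, boundedly invertible. The only genuinely delicate ingredient is the correspondence between solutions of the operator equation (\ref{v3.12}) and of the boundary value problem (\ref{v3.6})--(\ref{4.0}), which is already packaged into Lemma~\ref{vlm3.3}; the rest is a standard invocation of the analytic Fredholm theorem on a compact holomorphic family with one known regular point.
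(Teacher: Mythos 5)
Your overall route---compactness and holomorphy of $\cc{T}_4(\mu)$ from Lemma~\ref{vlm3.3}, bounded invertibility of $\I+\cc{T}_4(\mu_0)$ at a single point $\mu_0$ with $\RE\mu_0>0$ and $E_1-\mu_0^2\notin\spec(\Op_\ell)$, then the analytic Fredholm theorem---is exactly the argument the paper delegates to Lemmas~3.4--3.5 of \cite{Bor-MSb06}, so in substance you reproduce the intended proof; invoking Theorem~\ref{th2.2} to pick $\mu_0$ is not circular, since that theorem is proved in Section~3 independently of Section~4 (a non-real $\mu_0$ with $\RE\mu_0>0$ would avoid even this reliance, because $E_1-\mu_0^2$ is then automatically in the resolvent set of the self-adjoint $\Op_\ell$). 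The discussion of the ``existence half'' is superfluous: for $\I$ plus a compact operator, injectivity alone gives bounded invertibility.

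The one step you pass over too quickly is the implication $u:=\cc{T}_3(\mu_0)g=0\Rightarrow g=0$. Lemma~\ref{vlm3.3} asserts only that for a given solution $u$ of (\ref{v3.6}), (\ref{4.0}) there \emph{exists some} $g$ solving (\ref{v3.12}) with $u=\cc{T}_3(\mu_0)g$; applied to $u=0$ it produces the preimage $g=0$, but by itself this does not exclude a nonzero element of $\ker(\I+\cc{T}_4(\mu_0))$ that $\cc{T}_3(\mu_0)$ also annihilates---which is precisely what you must rule out, so as written the step is circular. What is needed is that the recovery procedure genuinely inverts $g\mapsto u$, i.e.\ that $\cc{T}_3(\mu_0)$ is injective, and this can be supplied: with $v:=\cc{T}_1(\mu_0)g$ and $w:=\cc{T}_2(\mu_0)g$, the difference $w-v$ satisfies $(-\D+1)(w-v)=0$ in $\Pi_a$ (piecewise in $\pm x_1>0$), vanishes on $\g_{\ell,a}$ by (\ref{v3.9}), has vanishing normal derivative on $\G_{0,a}$, and equals $u$ on $\big(\G_{\ell,a}\setminus\overline{\G}_{0,a}\big)\cup\{x: x_1=0\}$, because there $v=0$ by (\ref{v3.6a}) and $\xi_2=0$, so that $u=w$. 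Hence $w-v$ solves the same uniquely solvable mixed problem as the auxiliary function $U$, so $w-v=U$, and consequently $v=u-(1-\xi_2)U$ and $g=(-\D-E_1+\mu_0^2)v$ are determined by $u$. With $u=0$ this yields $U=0$, $v=0$, $g=0$, which closes the injectivity half; the remainder of your argument then goes through.
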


The last lemma implies that the only possible singularities to $(\I+\cc{T}_4(\cdot))^{-1}$ are isolated poles. Since we consider this operator only for small complex $\mu$, we are interested in a possible pole at zero. The presence of such pole is equivalent to the non-invertibility of $(\I+\cc{T}_4(0))$. In its turn, it yields that in this case the equation (\ref{v3.12}) has a nontrivial solution as $\mu=0$, $f=0$.  By Lemma~\ref{vlm3.3} it is equivalent to the existence of a non-trivial solution to (\ref{v3.6}), (\ref{4.0}) with $f=0$, $\mu=0$. In the next lemma we describe the only possible nontrivial solution to (\ref{v3.6}), (\ref{4.0}) with $f=0$, $\mu=0$.

\begin{lemma}\label{lm4.2}
The problem (\ref{v3.6}), (\ref{4.0}) with $f=0$, $\mu=0$ has at most one nontrivial solution up to a multiplicative constant.  If it exists, the corresponding constants $c_\pm(0)$ in (\ref{4.0}) are non-zero and can be chosen so that $c_+(0)=1$ and $c_-(0)$ is either $1$ or $-1$. This nontrivial solution has a certain parity under the transformation (\ref{2.7a}).
\end{lemma}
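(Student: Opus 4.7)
The plan is to exploit the invariance of the problem under the symmetry transformation (\ref{2.7a}), which I will denote by $\sigma$. The operator $\Op_\ell$, the sets $\g_\ell$ and $\G_\ell$, and the profile $\chi_1$ are all preserved by $\sigma$, while $u\mapsto u\circ\sigma$ interchanges the coefficients $c_+$ and $c_-$ in (\ref{4.0}) when $\mu=0$. Hence every bounded solution to (\ref{v3.6})--(\ref{4.0}) with $f=0$, $\mu=0$ splits into a $\sigma$-even part (for which $c_+=c_-$) and a $\sigma$-odd part (for which $c_+=-c_-$). Once we know that the solution space is one-dimensional, this splitting already forces the definite parity and the normalization $c_+(0)=1$, $c_-(0)=\pm 1$.

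The key step is to show that this solution space is at most one-dimensional. First I would rule out $E_1$ as an $L_2$-eigenvalue of $\Op_\ell$. Expanding a putative eigenfunction in the cross-sectional modes in each half-strip $\{x_1>\ell\}$ and $\{x_1<-\ell\}$, the $L_2$-requirement kills the $\chi_1$-component and leaves only exponentially decaying higher modes; matching these traces, via the well-posed Neumann--Neumann problem for $-\D-E_1$ on the middle rectangle $\Pi\cap\{|x_1|<\ell\}$, forces the eigenfunction to vanish identically. Granted this, uniqueness within each parity sector follows at once: two linearly independent $\sigma$-even (respectively $\sigma$-odd) solutions could be combined so that $c_+=c_-=0$ (respectively $c_+=-c_-=0$), which by the exponential remainder in (\ref{4.0}) lies in $L_2(\Pi)\cap\Ho^1(\Pi,\g_\ell)$ and would be an $L_2$-eigenfunction of $\Op_\ell$ at $E_1$, contradicting the previous step.

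To exclude the coexistence of both a nontrivial $\sigma$-even and a nontrivial $\sigma$-odd solution at the same value of $\ell$, I would invoke the strict ordering $\ell_1<\ell_2<\ldots$ of the critical values from item~\ref{it2th2.2} of Theorem~\ref{th2.2}, together with the fact, already established in the inductive construction of Section~3, that at each $\ell_n$ exactly one new isolated eigenvalue of $\Op_\ell$ is produced from $E_1$. Two independent threshold solutions at a single $\ell$ would, under a small perturbation, force two distinct eigenvalues to emerge simultaneously from the threshold, which would collapse two consecutive $\ell_n$'s and contradict that strict inequality.

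The remaining assertions then follow immediately. Since the solution space is one-dimensional and $\sigma$-invariant, its generator is an eigenvector of $\sigma$ with eigenvalue $\pm 1$, hence the definite parity. If $c_+(0)=0$, then by the parity also $c_-(0)=\pm c_+(0)=0$, producing once again an $L_2$-eigenfunction at $E_1$ and thus a contradiction; so $c_+(0)\neq 0$, and rescaling gives $c_+(0)=1$ with $c_-(0)=\pm 1$ according to the parity. The main obstacle I anticipate is the rigorous exclusion of $E_1$ from the $L_2$-spectrum of $\Op_\ell$: the transverse boundary type switches at the corner points $(\pm\ell,0)$ and $(\mp\ell,d)$, so the trace spaces on the matching interfaces $\{x_1=\pm\ell\}$ have limited regularity, and the separation-of-variables/matching argument must be justified carefully at these corners.
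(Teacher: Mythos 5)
There are two genuine gaps, and they sit exactly at the two hard points of the lemma. First, the exclusion of a nontrivial exponentially decaying (i.e.\ $L_2$) solution at $\mu=0$ is the core of the proof, and your argument for it does not hold up. Ruling the $\chi_1$-component out of the tails is fine, but the traces of the remaining decaying tails on $\{x_1=\pm\ell\}$ are not prescribed data, so you cannot conclude $u\equiv 0$ from ``well-posedness'' of an interior problem on $\Pi_\ell$; and in any case the mixed problem for $-\D-E_1$ on the middle rectangle is \emph{not} uniquely solvable for all $\ell$ (for suitable $\ell$, $E_1$ is an eigenvalue of the Laplacian on $(-\ell,\ell)\times(0,d)$ with Neumann horizontal sides), so the matching map can have a kernel. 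Threshold eigenvalues are a genuinely delicate issue here, and the paper handles this step by a Rellich/virial-type identity: it introduces $U(x)=\int_{-\infty}^{x_1}t\,u(t,x_2)\,dt$, checks $(\D+E_1)U=2u$ together with the boundary data of $U$, and after two auxiliary integrations by parts (against $x_1\sin\sqrt{E_1}x_2$ and $\cos\sqrt{E_1}x_2$) obtains $2\|u\|_{L_2(\Pi)}^2=0$. Some argument of this strength is needed; your sketch assumes away precisely this point.

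Second, your exclusion of the simultaneous existence of a $\sigma$-even and a $\sigma$-odd threshold solution is circular within the paper's logic. The implication ``nontrivial bounded threshold solution at $\ell_*$ $\Rightarrow$ an eigenvalue emerges from $E_1$ as $\ell$ passes $\ell_*$'' is the content of Theorem~\ref{th2.3}, proved in Section~5 via Lemma~\ref{lm4.7}, and the simple-pole, rank-one residue representation (\ref{v3.14}) used there relies on the one-dimensionality of the threshold solution space — i.e.\ on Lemma~\ref{lm4.2} itself. Theorem~\ref{th2.2} (which you may legitimately use) gives the strict ordering of the $\ell_n$, but it does not give the emergence mechanism, so the step ``two independent threshold solutions would force two eigenvalues to emerge simultaneously'' is unproved in your proposal. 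The paper avoids this entirely by an elementary second identity: it takes the sum $v$ of the putative even and odd solutions (normalized so that $c_-(0)=0$, $c_+(0)=2$), sets $V(x)=\int_{-\infty}^{x_1}v(t,x_2)\,dt$, and integration by parts yields $0=4$, a contradiction. The symmetry decomposition and the normalization $c_+(0)=1$, $c_-(0)=\pm1$ in your last paragraph are fine, but they only become a proof once these two analytic steps are supplied.
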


\begin{proof}
We prove first that the problem (\ref{v3.6}), (\ref{4.0}) can not possess a nontrivial exponentially decaying solution. Suppose such solution exists and denote it by $u$. By the standard smoothness improving theorems this function is infinitely differentiable everywhere in $\overline{\Pi}$ except the points $(\ell,0)$ and $(-\ell,d)$ and is continuous in $\overline{\Pi}$. Hence, the function
\begin{equation*}
U(x):=\int\limits_{-\infty}^{x_1} t u(t,x_2)\di t
\end{equation*}
is well-defined in $\overline{\Pi}$ and has the same smoothness as $u$. Employing (\ref{v3.6}), by direct calculations we check that
\begin{align}
&
\begin{aligned}
(\D+E_1) U(x)&=\frac{\p}{\p x_1} \big( x_1 u(x)\big)+\int\limits_{-\infty}^{x_1} \left(\frac{\p^2}{\p x_2^2}+E_1\right) t u(t,x_2)\di t
\\
=&\int\limits_{-\infty}^{x_1} (\D_{t,x_2}+E_1) t u(t,x_2)\di t
=2\int\limits_{-\infty}^{x_1} \frac{\p u}{\p t}(t,x_2)\di t
=2u(x),\quad x\in\Pi,
\end{aligned} \nonumber
\\
&
\begin{aligned}
&U=0&&\text{on}\quad \g_\ell^-:=\{x: x_1<-\ell, x_2=d\},
\\
&U=\int\limits_{-\infty}^{\ell} t u(t,0)\di t &&\text{on}\quad \g_\ell^+:=\{x: x_1>\ell, x_2=0\},
\\
&\frac{\p U}{\p x_2}=0&&\text{on}\quad \G_\ell^-:=\{x: x_1<\ell, x_2=0\},
\\
&\frac{\p U}{\p x_2}=\int\limits_{-\infty}^{-\ell} t \frac{\p u}{\p x_2}(t,0)\di t &&\text{on}\quad \G_\ell^+:=\{x: x_1>-\ell, x_2=d\}.
\end{aligned}
\end{align}
Since the function $u$ decays exponentially as $x_1\to\pm\infty$, the function $U$ decays exponentially as $x_1\to-\infty$ and is bounded as $x_1\to+\infty$.

We use the established properties of $U$ and the eigenvalue problem for $u$ and integrate by parts as follows,
\begin{equation*}
2\|u\|_{L_2(\Pi)}^2=\int\limits_{\p\Pi} u(\D+1)U\di x=\int\limits_{\p\Pi} \left(u\frac{\p U}{\p \nu}-U\frac{\p u}{\p\nu}
\right)\di s,
\end{equation*}
where $\nu$ is the outward normal to $\p\Pi$. Thus,
\begin{equation}\label{4.7}
\begin{aligned}
2\|u\|_{L_2(\Pi)}^2=&\int\limits_{\G_\ell^+} u\frac{\p U}{\p x_2}\di x_1+\int\limits_{\g_\ell^+} U\frac{\p u}{\p x_2}\di x_1
\\
=&\int\limits_{\g_\ell^-} x_1 \frac{\p u}{\p x_2}\di x_1 \int\limits_{\G_\ell^+} u\di x_1 +\int\limits_{\g_\ell^+}   \frac{\p u}{\p x_2}\di x_1 \int\limits_{\G_\ell^-} x_1 u\di x_1.
\end{aligned}
\end{equation}
Consider two more integrals,
\begin{equation*}
0=\int\limits_{\Pi} x_1\sin \sqrt{E_1} x_2 (\D+E_1) u\di x,\quad
0=-\int\limits_{\Pi} \cos \sqrt{E_1} x_2 (\D+E_1)u\di x.
\end{equation*}
In each of them we integrate by parts using again the eigenvalue problem for $u$,
\begin{equation}\label{4.8}
\begin{aligned}
0=&\int\limits_{\p\Pi} \left(x_1\sin\sqrt{E_1}x_2\frac{\p u}{\p\nu}-u\frac{\p}{\p\nu}x_1\sin\sqrt{E_1}x_2 \right)\di x
\\
=&
\int\limits_{\g_\ell^-} x_1 \frac{\p u}{\p x_2}\di x_1 + \sqrt{E_1 } \int\limits_{\G_\ell^-} x_1 u\di x_1,
\\
0=&-\int\limits_{\p\Pi} \left(\cos\sqrt{E_1}x_2\frac{\p u}{\p\nu}-u\frac{\p}{\p\nu}\cos\sqrt{E_1}x_2 \right)\di x
\\
=
&
\int\limits_{\g_\ell^+} \frac{\p u}{\p x_2}\di x_1 - \sqrt{E_1} \int\limits_{\G_\ell^+} u\di x_1.
\end{aligned}
\end{equation}
Hence,
\begin{equation*}
\int\limits_{\g_\ell^-} x_1 \frac{\p u}{\p x_2}\di x_1=-\sqrt{E_1} \int\limits_{\G_\ell^-} x_1 u\di x_1,
\quad
\int\limits_{\g_\ell^+} \frac{\p u}{\p x_2}\di x_1=\sqrt{E_1} \int\limits_{\G_\ell^+} u\di x_1.
\end{equation*}
We substitute the obtained formulas in (\ref{4.7}) and obtain $2\|u\|_{L_2(\Pi)}^2=0$, which yields $u=0$. Thus, the problem
(\ref{v3.6}), (\ref{4.0}) has no exponentially decaying solutions. The proven fact and the symmetry of $\g_\ell$, $\G_\ell$, and $\Pi$ under the transformation (\ref{2.7a}) imply that the problem (\ref{v3.6}), (\ref{4.0}) can have at most two solutions with non-zero constants $c_\pm(0)$ such that one of these solutions is even under (\ref{2.7a}) while the other is odd. To complete the proof, it remains to show that the problem (\ref{v3.6}), (\ref{4.0}) can not have two such solutions simultaneously.

Assume such solutions exist. Without loss of generality we assume that for the even solution the coefficients are $c_\pm(0)=1$ while for the odd solution $c_\pm(0)=\pm 1$. Let $v$ be a sum of these solutions and introduce the function
\begin{equation*}
V(x):=\int\limits_{-\infty}^{x_1} v(t,x_2)\di t.
\end{equation*}
This function is well-defined since $v$ decays exponentially and satisfies  (\ref{4.0}) with $c_-(0)=0$, $c_+(0)=2$. Completely by analogy with the study of $U$
given above, we check that
\begin{align*}
&(\D+E_1) V=0\quad\text{in}\quad\Pi, \qquad
V=0\quad\text{on}\quad \g_\ell^-,\qquad\frac{\p V}{\p x_2}=0\quad\text{on}\quad\G_\ell^-,
\\
&
v=\int\limits_{\G_\ell^-} V\di x_1\quad\text{on}\quad \g_\ell^+,\qquad
\frac{\p v}{\p x_2}=\int\limits_{\g_\ell^-}\frac{\p V}{\p x_2}\di x_1\quad\text{on}\quad \g_\ell^-.
\end{align*}
We integrate by parts in the same way as in (\ref{4.7}),
\begin{equation}\label{4.9}
\begin{aligned}
0=&\int\limits_{\Pi} v (\D+E_1) V \di x=\int\limits_{\p\Pi} \left(v\frac{\p V}{\p\nu}-V\frac{\p v}{\p\nu}\right)\di s+4
\\
=&\int\limits_{\G_\ell^+} v\frac{\p V}{\p x_2}\di x_1 +\int\limits_{\g_\ell^+} V\frac{\p v}{\p x_1}\di x_1+4
\\
=&\int\limits_{\g_\ell^-} \frac{\p v}{\p x_2}\di x_1\int\limits_{\G_\ell^+} v \di x_1+ \int\limits_{\g_\ell^+} \frac{\p v}{\p x_2}\di x_1 \int\limits_{\G_\ell^-} v \di x_1+4.
\end{aligned}
\end{equation}
Proceeding as in (\ref{4.8}), we get
\begin{align*}
0=&\int\limits_{\p\Pi} \cos \sqrt{E_1}x_2 (\D+E_1) v\di x= \sqrt{E_1} \int\limits_{\G_\ell^+} v\di x_1 - \int\limits_{\g_\ell^+} \frac{\p v}{\p x_2}\di x_1,
\\
0=&\int\limits_{\Pi} \sin \sqrt{E_1} x_2 (\D+E_1)v\di x= \int\limits_{\g_\ell^-} \frac{\p v}{\p x_2}\di x_1+\sqrt{E_1} \int\limits_{\G_\ell^-} v\di x_1.
\end{align*}
These identities imply
\begin{equation*}
\int\limits_{\g_\ell^-} \frac{\p v}{\p x_2}\di x_1  \int\limits_{\G_\ell^+} v\di x_1 + \int\limits_{\g_\ell^+} \frac{\p v}{\p x_2}\di x_1 \int\limits_{\G_\ell^-} v\di x_1=0
\end{equation*}
that contradicts (\ref{4.9}).
\end{proof}

If for a given $\ell$ the nontrivial solution described in the previous lemma exists, we denote it by $\phi$.

We remind that the operator $\cc{T}_3(\mu)(\I+\cc{T}_4(\mu))^{-1}$ gives the solution to the problem (\ref{v3.6}), (\ref{4.0}). The next lemma describes the structure of this operator for small complex $\mu$.

\begin{lemma}\label{lm4.7}
Let $b>0$ be any number and $Q\in\Xi$.  If the solution $\phi$ does not exist, the operator
\begin{equation*}
\cc{T}_3(\I+\cc{T}_4)^{-1}: L_2(\Pi_a)\to \H^1(\Pi_b)\cap \H^2(Q)
\end{equation*}
is holomorphic w.r.t. small complex $\mu$. If the solution $\phi$ exists, the operator $\cc{T}_3(\I+\cc{T}_4)^{-1}$ is meromorphic w.r.t. small complex $\mu$, has a simple pole at $\mu=0$, and
\begin{equation}\label{v3.14}
\cc{T}_3(\mu)(\I+\cc{T}_4(\mu))^{-1}=\phi\frac{\cc{P}}{\mu}+\cc{T}_5(\mu),\quad \cc{P}:=\int\limits_{\Pi} \cdot\,\phi\di x,
\end{equation}
where the operator
\begin{equation*}
\cc{T}_5: L_2(\Pi_a)\to \H^1(\Pi_b)\cap \H^2(Q)
\end{equation*}
is bounded and holomorphic w.r.t. small complex $\mu$.
\end{lemma}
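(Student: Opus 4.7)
My approach applies analytic Fredholm theory to the pencil $\I + \cc{T}_4(\mu)$. By Lemma~\ref{lm4.1} the resolvent $(\I + \cc{T}_4(\mu))^{-1}$ is meromorphic in a complex neighborhood of $\mu = 0$, and Lemma~\ref{vlm3.4} gives the holomorphy of $\cc{T}_3(\mu): L_2(\Pi_a) \to \H^1(\Pi_b)$, so the composition $\cc{T}_3(\mu)(\I + \cc{T}_4(\mu))^{-1}$ is meromorphic as a map into $\H^1(\Pi_b)$ with only $\mu = 0$ as a possible pole. In every case the improvement from $\H^1(\Pi_b)$ to $\H^1(\Pi_b) \cap \H^2(Q)$ for $Q \in \Xi$ will follow from standard interior elliptic regularity applied to the equation $(-\Delta - E_1 + \mu^2) u = f$ satisfied by the image $u = \cc{T}_3(\mu) g$, using that $Q$ is separated from the singular corners $(\pm \ell, \cdot)$, $(-\ell, d)$ and that the coefficients depend holomorphically on $\mu$.

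If the solution $\phi$ does not exist, Lemma~\ref{vlm3.3} forces $\ker(\I + \cc{T}_4(0)) = \{0\}$. Since $\cc{T}_4(0)$ is compact, the Fredholm alternative gives invertibility of $\I + \cc{T}_4(0)$, which persists for small complex $\mu$ by the holomorphy of $\cc{T}_4$, so $(\I + \cc{T}_4(\mu))^{-1}$ is holomorphic at $\mu = 0$ and the first claim follows.

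Assume now that $\phi$ exists. By Lemmas~\ref{lm4.2} and~\ref{vlm3.3}, $\ker(\I + \cc{T}_4(0))$ is one-dimensional, spanned by some $g_\phi$ with $\cc{T}_3(0) g_\phi = \phi$; by Fredholm theory the cokernel is also one-dimensional. The plan is to identify this cokernel through a direct PDE argument. For $g \in L_2(\Pi_a)$ set $u := \cc{T}_3(0) g$; applying Green's second identity to $u$ and $\phi$ on $\Pi_R$ and letting $R \to \infty$, the boundary terms on $\partial \Pi$ vanish thanks to the matching Dirichlet/Neumann conditions on $\g_\ell \cup \G_\ell$, while the lateral contributions at $x_1 = \pm R$ tend to zero by the asymptotics (\ref{4.0}) of $u$ at $\mu = 0$ and (\ref{2.19}) of $\phi$. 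This reduces the solvability of $(\I + \cc{T}_4(0))g = f$ in $L_2(\Pi_a)$ to the single orthogonality condition $\int_\Pi f \phi\,dx = 0$, and via Lemma~\ref{vlm3.3} identifies $\mathrm{coker}(\I + \cc{T}_4(0))$ as the span of $\phi|_{\Pi_a}$.

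Solving the Laurent expansion $(\I + \cc{T}_4(\mu))^{-1} = R_{-1}/\mu + R_0 + O(\mu)$ perturbatively from $\cc{T}_4(\mu) = \cc{T}_4(0) + \mu T_1 + O(\mu^2)$ then forces $R_{-1}$ to be a rank-one operator with range in $\mathrm{span}(g_\phi)$ and coefficient functional proportional to $\int \cdot \,\phi\,dx$; composition with $\cc{T}_3(0)g_\phi = \phi$ produces the residue $\phi\cc{P}/\mu$ of the required form. The principal obstacle is to verify that the pole is actually simple (no $\mu^{-2}$ term) and to fix the normalization in $\cc{P}$. I would settle both by inserting the ansatz $u(x,\mu) = A(\mu)\phi(x)/\mu + \widetilde u(x,\mu)$, with $\widetilde u$ regular at $\mu = 0$, into the PDE $(-\Delta - E_1 + \mu^2)u = f$, testing against $\phi$, and tracking carefully both the lateral boundary terms $\mu(c_+(\mu) + \kappa c_-(\mu))\E^{-\mu R}$ produced by Green's identity on $\Pi_R$ and the slow-decay contribution $\mu^2 \int_{\Pi_R} u\phi\,dx$ coming from the $\E^{\mp\mu x_1}\chi_1$ tails of $u$; the parity relation $c_-(0) = \kappa c_+(0)$ with $\kappa = \pm 1$ enforced by the symmetry (\ref{2.7a}) of $\phi$ from Lemma~\ref{lm4.2} then closes the identity and pins down $A(0)$ as a definite multiple of $\int_\Pi f\phi\,dx$, thereby establishing both the simplicity of the pole and the explicit form of $\cc{P}$, while the regular remainder $\cc{T}_5(\mu) := \cc{T}_3(\mu)(\I + \cc{T}_4(\mu))^{-1} - \phi\cc{P}/\mu$ inherits its $\H^1(\Pi_b) \cap \H^2(Q)$-regularity from the first paragraph.
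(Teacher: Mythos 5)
Your overall strategy is sound and, in substance, it reconstructs the very argument the paper chooses not to spell out: the authors treat the case without $\phi$ exactly as you do (invertibility of $\I+\cc{T}_4(0)$ via the Fredholm alternative, persistence for small complex $\mu$ by holomorphy of $\cc{T}_4$, composition with $\cc{T}_3$, with the $\H^2(Q)$ improvement coming from Lemmas~\ref{vlm3.1}--\ref{vlm3.2}), while for the case when $\phi$ exists they simply assert that the proof of (\ref{v3.14}) repeats word-by-word that of Theorem~3.4 in \cite{BorExGad02}. Your Laurent expansion of $(\I+\cc{T}_4(\mu))^{-1}$, the identification of $\ker(\I+\cc{T}_4(0))$ and of the cokernel through the correspondence of Lemma~\ref{vlm3.3} and Green's identity, and the tracking of the slowly decaying $\E^{\mp\mu x_1}\chi_1$ tails together with the parity relation $c_-(0)=\wp c_+(0)$, $\wp^2=1$, are precisely the mechanism behind the cited theorem; so you are filling in the outsourced step rather than taking a genuinely different road, and the gain of your version is that the rank-one structure of the residue and the reason it cannot vanish (the two tail contributions add, with coefficient $1+\wp^2=2$) are made explicit.

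Two points need tightening. First, the ansatz $u=A(\mu)\phi/\mu+\widetilde u$ with $\widetilde u$ regular presupposes the simplicity of the pole you set out to prove; apply the same computation to the most singular Laurent coefficient instead: if the pole had order $k\geqslant 2$, that coefficient solves the homogeneous problem (\ref{v3.6}), (\ref{4.0}) at $\mu=0$, hence equals $t\phi$ by Lemma~\ref{lm4.2}, and your identity on $\Pi_R$ then produces the unbalanced term $(1+\wp^2)\,t\,\mu^{1-k}$ against the bounded quantity $\int_\Pi f\phi\,\di x$, forcing $t=0$; this is exactly your non-cancellation argument, just run in the correct logical order. Second, the constant cannot be left as ``a definite multiple'': with the normalization $c_+(0)=1$, $c_-(0)=\wp$ of Lemma~\ref{lm4.2}, the identity $\mu^2\int_\Pi u\phi\,\di x=\int_\Pi f\phi\,\di x$ yields $A(0)=\tfrac12\int_\Pi f\phi\,\di x$, i.e. the residue functional comes out as $\tfrac12\int_\Pi\cdot\,\phi\,\di x$ rather than the $\cc{P}$ literally written in (\ref{v3.14}); to prove the statement as formulated you must either fix the normalization of $\phi$ accordingly or carry the explicit constant, and the choice is not cosmetic, since $\cc{P}$ enters (\ref{5.7}) and hence the value of $\mu_1$ (note that the independent route via Lemma~\ref{lm5.3} gives $\mu_1=\tfrac12\int\phi\,\cc{L}_1\phi\,\di y$, consistent with the factor $\tfrac12$).
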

\begin{proof}
If the solution $\phi$ does not exist, the operator $(\I+\cc{T}_4(0))$ is boundedly invertible. Hence, by Lemma~\ref{vlm3.3} the inverse is holomorphic in small complex $\mu$. Now by Lemma~\ref{vlm3.4} the operator $\cc{T}_3(\mu)(\I+\cc{T}_4(\mu))^{-1}$ is well-defined, bounded, and holomorphic in $\mu$ as that from $L_2(\Pi_a)$ into $\H^1(\Pi_b)\cap \H^2(Q)$, where $b>0$, $Q\in\Xi$.

If $\phi$ exists, the proof of the representation (\ref{v3.14}) reproduces word-by-word the proof of Theorem~3.4 in \cite{BorExGad02}. Other statements of the lemma in this case follow from (\ref{v3.14}).
\end{proof}

In what follows for noncritical $l$ we let $\cc{P}:=0$ and under such definition the formula (\ref{v3.14}) is valid for all $\ell>0$.

\section{Emerging eigenvalues}\label{sec:emev}

In this section we study the eigenvalues close to the threshold of the essential spectrum. Namely, we prove items~\ref{it4th2.2},~\ref{it5th2.2} of Theorem~\ref{th2.2}. In the proof we follow the main lines of \cite{G02}, see also \cite[Sec. 4]{Bor-MSb06}, \cite[Sec. I\!I\!I.B]{BorExGad02}.

Let $\ell_*$ be a given value of $\ell$ and consider the value $\ell=\ell_*+\e$ with $\e$ small. For such values $\ell$ we consider the eigenvalue problem of $\Op_\ell$. We write it as the boundary value problem (\ref{v3.6}), (\ref{4.0}) with $f=0$ denoting the spectral parameter $\l=E_1-\mu^2$. In this boundary value problem we make a change of variables which rescales the sets $\g_\ell$ and $\G_\ell$ to $\g_{\ell_*}$ and $\G_{\ell_*}$. This change is introduced by (\ref{5.1}), where we assume that
the function $\xi_1$ is odd. It is easy to see that  under this change the problem (\ref{v3.6}) with $f=0$ casts into the form
\begin{align}
&
\begin{aligned}
&(-\D-E_1+\mu^2-\e\cc{L}_\e)\psi=0 \quad\text{in}\quad \Pi,
\\
&\psi=0\quad\text{on}\quad\g_{\ell_*},\qquad \frac{\p \psi}{\p y_2}=0\quad \text{on} \quad \G_{\ell_*},
\end{aligned}
\label{5.2}
\\
&\cc{L}_\e:=\Big(-2\xi_1'(x_1(y_1,\e))+\e\big( \xi_1'(x_1(y_1,\e))\big)^2
\Big) \frac{\p^2}{\p y_1^2}-\xi_1''(x_1(y_1,\e))\frac{\p}{\p y_1}, \label{5.3}
\end{align}
where we redenoted the eigenfunction by $\psi$.
This problem is completed by the condition (\ref{4.0}) for $\psi$, where $x$ should be replaced by $y$. By Lemma~\ref{vlm3.3} it is equivalent to the operator equation
\begin{equation}\label{5.4}
\psi=\e\cc{T}_3(\mu)\big(\I+\cc{T}_4(\mu)\big)^{-1} \cc{L}_\e \psi
\end{equation}
in $\H^1(\Pi_b)\cap\H^2(Q)$ for each $b>0$ and $Q\in\Xi$. The auxiliary parameter $a$ involved in the definition of $\cc{T}_3$ and $\cc{T}_4$ should be chosen large enough so that the supports of the coefficients of the operator $\cc{L}_\e$ lie inside $\Pi_a$.

By Lemma~\ref{vlm3.3}, if $\l=E_1-\mu^2$ is an eigenvalue of $\Op_\ell$, then the equation (\ref{5.4}) has a nontrivial solution and vice versa. This is why to prove items~\ref{it4th2.2},~\ref{it5th2.2} of Theorem~\ref{th2.2}
it is sufficient to study the existence of such solutions for sufficiently small $\e$.

We assume that the case when the solution $\phi$ described in Lemma~\ref{lm4.2} does not exist for $\ell=\ell_*$. In this case by Lemmas~\ref{vlm3.4},~\ref{lm4.7} the operator $\cc{T}_3(\mu)\big(\I+\cc{T}_4(\mu)\big)^{-1}\cc{L}_\e$ is bounded uniformly in small $\e$ and $\mu$. Hence, the equation (\ref{5.4}) has only trivial solution and we arrive at
\begin{lemma}\label{lm5.1}
Suppose for $\ell=\ell_*$ the solution $\phi$ does not exist. Then the operator $\Op_{\ell_*+\e}$ has no eigenvalues converging to $E_1$ as $\e\to+0$.
\end{lemma}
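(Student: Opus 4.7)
The plan is a direct contraction-mapping argument applied to the operator equation (\ref{5.4}), which by Lemma~\ref{vlm3.3} is equivalent to the eigenvalue problem for $\Op_{\ell_*+\e}$ with spectral parameter $\l=E_1-\mu^2$ near the threshold. The hypothesis that $\phi$ does not exist at $\ell=\ell_*$ is used precisely to eliminate the only obstacle to such an argument, namely the possible pole of $\cc{T}_3(\mu)(\I+\cc{T}_4(\mu))^{-1}$ at $\mu=0$ described in Lemma~\ref{lm4.7}.

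First I would fix the auxiliary parameter $a$ in the definitions of $\cc{T}_3,\cc{T}_4$ large enough that the supports of the coefficients of $\cc{L}_\e$ in (\ref{5.3}) lie inside $\Pi_a$ for all sufficiently small $\e$; this is possible because $\xi_1$ is compactly supported and $y_1\mapsto x_1(y_1,\e)$ is a small $C^\infty$ perturbation of the identity. Since $\xi_1$ equals $\pm 1$ on a neighborhood of $\pm\ell_*$, the derivatives $\xi_1'$ and $\xi_1''$ vanish near the corner points $(\pm\ell_*,0)$, $(\pm\ell_*,d)$, so I can fix a subdomain $Q\in\Xi$ containing these supports. Then $\cc{L}_\e:\H^2(Q)\to L_2(\Pi_a)$ is a bounded linear operator with norm bounded uniformly in $\e$ small.

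Next, by the hypothesis and Lemma~\ref{lm4.7}, the operator
\[
\cc{T}_3(\mu)(\I+\cc{T}_4(\mu))^{-1}:L_2(\Pi_a)\to \H^1(\Pi_b)\cap \H^2(Q)
\]
(for any $b\geqslant a$) is holomorphic at $\mu=0$, and thus has operator norm bounded uniformly for $|\mu|$ in a small disc about the origin. Composing, the map
\[
\cc{A}(\e,\mu):=\e\,\cc{T}_3(\mu)(\I+\cc{T}_4(\mu))^{-1}\cc{L}_\e
\]
acts boundedly on $\H^1(\Pi_b)\cap\H^2(Q)$ with norm $O(|\e|)$ uniformly in small $|\mu|$. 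Hence for $\e$ sufficiently small, $\cc{A}(\e,\mu)$ is a strict contraction for every $|\mu|$ in a fixed small neighborhood of zero, so $\I-\cc{A}(\e,\mu)$ is invertible and (\ref{5.4}) admits only the trivial solution $\psi=0$.

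Finally, if $\Op_{\ell_*+\e}$ had an eigenvalue $\L=E_1-\mu^2\to E_1$ as $\e\to+0$, then $\mu\to 0$ with $\RE\mu>0$ and the associated eigenfunction decays exponentially; after the change of variables (\ref{5.1}) it would furnish a nontrivial solution to (\ref{5.2}) with the asymptotics (\ref{4.0}) and, by Lemma~\ref{vlm3.3}, a nontrivial solution to (\ref{5.4}) — a contradiction. I expect the only delicate point to be the bookkeeping of function spaces: confirming that the composition $\cc{T}_3(\mu)(\I+\cc{T}_4(\mu))^{-1}\cc{L}_\e$ is well defined on a single space on which it acts as a contraction. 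This relies on the separation of the supports of $\xi_1',\xi_1''$ from the corner points, which is built into the construction of $\xi_1$; once this is in place, the remainder is a routine Neumann series estimate.
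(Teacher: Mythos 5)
Your proposal is correct and follows essentially the same route as the paper: using the hypothesis together with Lemma~\ref{lm4.7} (and Lemma~\ref{vlm3.4}) to get uniform boundedness of $\cc{T}_3(\mu)(\I+\cc{T}_4(\mu))^{-1}\cc{L}_\e$ for small $\mu$ and $\e$, so that the fixed-point equation (\ref{5.4}) admits only the trivial solution, and then invoking the equivalence from Lemma~\ref{vlm3.3} to exclude eigenvalues approaching $E_1$. The extra bookkeeping you supply (supports of $\xi_1',\xi_1''$ away from the corner points, choice of $a$ and $Q$) is exactly what the paper leaves implicit.
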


The rest of this section is devoted to the case of the existence of $\phi$. We suppose (\ref{2.19}) for this function.
We employ the representation (\ref{v3.14}) to rewrite the equation (\ref{5.4}) as
\begin{equation}\label{5.5}
\psi=\frac{\e\cc{P}\cc{L}_\e \psi}{\mu}\phi +\e \cc{T}_5(\mu)\cc{L}_\e \psi.
\end{equation}
By Lemma~\ref{lm4.7} and the definition of $\cc{L}_\e$ the operator $\cc{T}_5(\mu)\cc{L}_\e$ is bounded in $\H^1(\Pi_b)\cap \H^2(Q)$ and jointly holomorphic in $\mu$ and $\e$. Hence, the operator $\big(\I-\e \cc{T}_5(\mu)\cc{L}_\e\big)$ is invertible and we apply the inverse to (\ref{5.5}),
\begin{equation}\label{5.6}
\psi=\frac{\e\cc{P}\cc{L}_\e \psi}{\mu} \big(\I-\e \cc{T}_5(\mu)\cc{L}_\e\big)^{-1} \phi.
\end{equation}
Since $\psi$ is an eigenfunction, it is non-zero and in view of the last equation it yields $\cc{P}\cc{L}_\e \psi\not=0$. Bearing in mind this inequality, we apply the functional $\cc{P}\cc{L}_\e$ to (\ref{5.6}) and divide it then by $\cc{P}\cc{L}_\e \psi$,
\begin{align}
&1=\frac{\e}{\mu}  \cc{P}\cc{L}_\e \big(\I-\e \cc{T}_5(\mu)\cc{L}_\e\big)^{-1} \phi,\nonumber
\\
&\mu=\e \cc{P} \big(\I-\e \cc{L}_\e\cc{T}_5(\mu)\big)^{-1} \cc{L}_\e\phi.
\label{5.7}
\end{align}
This is the equation determining the values of $\mu$ for which the equation (\ref{5.4}) has a nontrivial solution. This solution is given by (\ref{5.6}). Indeed, the fraction $\frac{\e\cc{P}\cc{L}_\e \psi}{\mu}$ is a constant, and we can determine $\psi$ as
\begin{equation}\label{5.8}
\psi=C \big(\I-\e \cc{T}_5(\mu)\cc{L}_\e\big)^{-1} \phi,\quad C=\mathrm{const}.
\end{equation}
In what follows we shall show that the most convenient way to choose $C$ is to let $C=1$.

By Lemma~\ref{lm4.7} the function
\begin{equation*}
F(\mu,\e):=\cc{P} \big(\I-\e \cc{L}_\e\cc{T}_5(\mu)\big)^{-1} \cc{L}_\e\phi
\end{equation*}
is jointly holomorphic in small $\mu$ and $\e$. The derivative
$\frac{\p F}{\p\mu}(0,\e)$ is bounded uniformly in $\e$. Hence, by the implicit function theorem the equation (\ref{5.7}) has precisely one root $\mu_\e$, which is holomorphic in $\e$. It is exactly the value for which the problem (\ref{5.2}), (\ref{5.3}) has a nontrivial solution.  In view of (\ref{4.0}) this solution is an eigenfunction if $\mu_\e>0$. In this case the eigenvalue itself is given by the formula
\begin{equation}\label{4.32}
\l_\e=E_1-\mu_\e^2.
\end{equation}
In what follows we shall show that indeed $\mu_\e>0$.

Since the root $\mu_\e$ to the equation  (\ref{5.7}) is holomorphic in $\e$, it can be represented as a convergent series
\begin{equation}\label{5.9}
\mu_\e=\sum\limits_{j=1}^{\infty} \e^j \mu_j.
\end{equation}
The equation (\ref{5.8}) implies that the associated nontrivial solution $\psi_\e$ to (\ref{5.2}), (\ref{4.0}) can be chosen holomorphic in  $\e$ in $\H^1(\Pi_b)\cap\H^2(Q)$,
\begin{equation}\label{5.11}
\psi_\e(y)=\sum\limits_{j=0}^{\infty} \e^j \psi_j(y),
\end{equation}
Let us determine $\mu_j$ and $\psi_j$.

Consider the eigenvalue problem (\ref{5.2}), (\ref{4.0}) for $\psi_\e$. The coefficients of the operator $\cc{L}_\e$ are compactly supported with supports lying inside $\Pi_{2\ell_*}$. Outside $\Pi_{2\ell_*}$ we can solve the problem (\ref{5.2}) by the separation of variables and in view of the asymptotics (\ref{4.0}) it implies
\begin{equation*}
\psi_\e(x)=\sum\limits_{m=1}^{\infty} A_m^\pm(\e) \chi_m(x_2) \E^{-\sqrt{E_m-E_1+\mu_\e^2}|x_1|},\quad |x_1|>2\ell_*.
\end{equation*}
It is easy to see that this identity is equivalent to the following ones,
\begin{equation}\label{5.34}
\left(\frac{\p}{\p y_1}\pm\sqrt{E_m-E_1+\mu_\e^2}\right) \int\limits_0^d \psi_\e(y)\chi_m(y)\di y_2=0,\quad y_1=\pm 2\ell_*.
\end{equation}
To determine $\psi_j$ we shall deal with the problem (\ref{5.2}), (\ref{5.34}) instead of (\ref{5.2}), (\ref{4.0}).

We substitute the series (\ref{5.9}), (\ref{5.11}) into (\ref{5.2}) and collect the coefficients at like powers of $\e$. It yields the equations for $\psi_j$,
\begin{equation}\label{5.35}
\begin{aligned}
-&(\D+E_1)\psi_j=-\sum\limits_{p=2}^{j} M_p \psi_{j-p} +\sum\limits_{p=1}^{j} \cc{L}_p\psi_{j-p}\quad\text{in}\quad\Pi_{2\ell_*},
\\
&\psi_j=0\quad\text{on}\quad \g_{\ell_*}\cap \p\Pi_{2\ell_*},\qquad \frac{\p\psi_j}{\p y_2}=0 \quad\text{on}\quad \G_{\ell_*}\cap\p\Pi_{2\ell_*},
\end{aligned}
\end{equation}
where $M_p$ are the coefficients of the square of $\mu_\e^2$, i.e.,
\begin{equation}\label{5.36}
M_p:=\sum\limits_{k=1}^{p-1}\mu_k \mu_{p-k},
\end{equation}
and $\cc{L}_p$ are differential operators with infinitely differentiable compactly supported coefficients,
\begin{equation*}
\cc{L}_p=K_{11}^{(p)}(y_1)\frac{\p^2}{\p y_1^2}+ K_1^{(p)}(y_1) \frac{\p}{\p y_1}.
\end{equation*}
In particular,
\begin{align}
& \cc{L}_1=-2\xi_1'(y_1)\frac{\p^2}{\p y_1^2}-\xi_1''(y_1) \frac{\p}{\p y_1},\label{5.38}
\\
&\cc{L}_2= \big( {\xi_1'}^2(y_1)-2\xi_1''(y_1)\xi_1(y_1)
\big) \frac{\p^2}{\p y_1^2} - \xi_1''' (y_1) \xi_1(y_1) \frac{\p}{\p y_1}.\label{4.27}
\end{align}
In the same way we rewrite the conditions (\ref{5.34}) for the functions $\psi_j$,
\begin{equation}
\frac{\p}{\p y_1} \int\limits_0^d \psi_j(y)\chi_1(y)\di y_2= \mp \sum\limits_{p=1}^{j} \mu_p \int\limits_{0}^{d} \psi_{j-p}(y)\chi_1(y)\di y_2
\label{5.40}
\end{equation}
as $y_1=\pm 2\ell_*$, and
\begin{equation}\label{5.41}
\begin{aligned}
& \left(\frac{\p}{\p y_1}\pm \sqrt{E_m-E_1}\right) \int\limits_{0}^{d} \psi_j(y) \chi_m(y)\di y_2
\\
&\hphantom{\Bigg(}= \mp \sqrt{E_m-E_1} \sum\limits_{p=2}^{j} b_p \left(\frac{\mu_1}{\sqrt{E_m-E_1}},\ldots,\frac{\mu_{p-1}}{\sqrt{E_m-E_1}}\right)
\int\limits_{0}^{d} \psi_{j-p}(y)\chi_m(y)\di y_2
\end{aligned}
\end{equation}
as $y_1=\pm 2\ell_*$, $\quad m\geqslant 2$, where $b_p=b_p(z_1,\ldots,z_{p-1})$ are certain polynomials in $(z_1,\ldots,z_{p-1})$, and, in particular,
\begin{equation}\label{5.42}
b_2(z_1)=\frac{z_1^2}{2}.
\end{equation}
To solve the obtained problems for $\psi_j$, we shall make use of an auxiliary
\begin{lemma}\label{lm5.3}
Given a function $f\in L_2(\Pi_{2\ell_*})$ and two sequences $\{a_m^\pm\}_{m=1}^\infty$ such that
\begin{equation}\label{5.43}
\sum\limits_{m=1}^{\infty} |a_m^\pm|^2<\infty,
\end{equation}
consider the boundary value problem
\begin{align}
&
\begin{gathered}
-(\D+E_1)u=f\quad \text{in}\quad \Pi_{2\ell_*},
\\
u=0\quad\text{on}\quad \g_{\ell_*}\cap \p\Pi_{2\ell_*},\qquad
\frac{\p u}{\p y_2}=0\quad \text{on}\quad \G_{\ell_*} \cap \p\Pi_{2\ell_*},
\end{gathered}\label{5.44}
\\
&\left(\frac{\p}{\p y_1} \pm \sqrt{E_m-E_1}
\right) \int\limits_{0}^{d}\psi_j(y)
\chi_m(y)\di y_2=a_m^\pm.\label{5.45}
\end{align}
This problem is solvable in $\H^1(\Pi_{2\ell_*})$, if and only if the solvability condition
\begin{equation}\label{5.46}
\int\limits_{\Pi_{2\ell_*}} f\phi \di x= \sum\limits_{m=1}^{\infty} (\wp a_m^- - a_m^+) a_m^*
\end{equation}
holds true, where $\wp:=1$, if $\phi$ is even under the transformation (\ref{2.7a}), and  $\wp:=-1$, if $\phi$ is odd. The coefficients $a_m^*$ are determined by the equation
\begin{equation}\label{5.47}
\phi(\ell_*,y_2)=\sum\limits_{m=1}^{\infty} a_m^* \chi_m(\ell_*,y_2),\quad a_m^*=\int\limits_{0}^{d} \phi(\ell_*,y_2) \chi_m(\ell_*,y_2)\di y_2.
\end{equation}
The solution to the problem (\ref{5.44}), (\ref{5.45})
is determined up to an additive term $C\phi$. There exists the unique solution to  (\ref{5.44}), (\ref{5.45})
satisfying the condition
\begin{equation}\label{4.22}
\int\limits_{0}^{d} u(2\ell_*,y_2)\chi_1(2\ell_*,y_2)\di y_2+\wp \int\limits_{0}^{d} u(-2\ell_*,y_2)\chi_1(-2\ell_*,y_2)\di y_2=0.
\end{equation}
\end{lemma}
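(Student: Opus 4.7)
The plan is to treat (\ref{5.44})--(\ref{5.45}) as a Fredholm problem of index zero on the bounded cylinder $\Pi_{2\ell_*}$ and to read (\ref{5.46}) off from Green's identity paired with the homogeneous solution $\phi$. The DtN-type terms $\sqrt{E_m-E_1}$ appearing in (\ref{5.45}) are strictly positive for $m\geqslant 2$, so they generate coercive contributions to the natural sesquilinear form and force every kernel element to decay in the exterior modes; only the mode $m=1$ is free (the coefficient $\sqrt{E_1-E_1}$ vanishes), and this is precisely the source of the one-dimensional defect.

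For the necessary direction I would multiply the equation in (\ref{5.44}) by $\phi$, integrate over $\Pi_{2\ell_*}$, and apply Green's identity. The contributions on $\p\Pi\cap\p\Pi_{2\ell_*}$ vanish because $u$ and $\phi$ satisfy matching Dirichlet/Neumann conditions on the top and bottom of the strip, leaving
\begin{equation*}
\int_{\Pi_{2\ell_*}} f\phi\di y = -\int_0^d (\phi \p_{y_1}u - u \p_{y_1}\phi)\Big|_{y_1=2\ell_*}\di y_2 + \int_0^d (\phi \p_{y_1}u - u \p_{y_1}\phi)\Big|_{y_1=-2\ell_*}\di y_2.
\end{equation*}
In the half-strips $|y_1|>\ell_*$ the function $\phi$ admits a separation-of-variables expansion in the modes $\chi_m$, yielding $\p_{y_1}\Phi^\pm_m=\mp\sqrt{E_m-E_1}\,\Phi^\pm_m$ for $m\geqslant 2$, together with $\p_{y_1}\Phi^\pm_1=0$ from the asymptotics (\ref{2.19}). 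Expanding $u$ and $\phi$ on each cross-section in $\{\chi_m\}$, substituting (\ref{5.45}) for $\p_{y_1}u$, and observing that the $\sqrt{E_m-E_1}\,\Phi^\pm_m U^\pm_m$ terms cancel identically, one reduces the boundary integrals to clean pairings $\sum_m \Phi^\pm_m a_m^\pm$. The $\wp$-parity of $\phi$ under (\ref{2.7a}) then relates the cross-sectional Fourier traces of $\phi$ on the two ends up to the factor $\wp$, and after invoking the definition (\ref{5.47}) the right-hand side collapses to the form stated in (\ref{5.46}).

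For sufficiency and the characterisation of solutions I would apply the Fredholm alternative to the bounded operator associated with (\ref{5.44})--(\ref{5.45}). A standard coercivity/compactness argument based on the positivity of the $m\geqslant 2$ absorbing terms and the compact embedding $\H^1(\Pi_{2\ell_*})\hookrightarrow L_2(\Pi_{2\ell_*})$ shows that this operator has index zero. Its kernel is at most one-dimensional: any kernel element $v$, by separating variables on $\{|y_1|>\ell_*\}$ and matching at $y_1=\pm 2\ell_*$, extends to a bounded $\Hloc^1$-solution of (\ref{v3.6}), (\ref{4.0}) with $f=0$ and $\mu=0$ on the whole of $\Pi$, and Lemma~\ref{lm4.2} then forces $v$ to be a scalar multiple of $\phi$. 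Consequently the range of the operator is exactly the annihilator of $\mathrm{span}\{\phi\}$ made explicit by (\ref{5.46}), and uniqueness modulo $\phi$ follows. Since $\phi$ has leading mode $\chi_1$ at infinity by (\ref{2.19}), its first Fourier coefficients $\Phi_1^\pm$ at the cross-sections $y_1=\pm 2\ell_*$ are nonzero, so the linear functional appearing in (\ref{4.22}) evaluated on $\phi$ is nonzero; a unique multiple of $\phi$ can then be subtracted from any particular solution to impose (\ref{4.22}).

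The main obstacle is the bookkeeping in the Green's identity calculation: one must simultaneously track the $\wp$-symmetry of $\phi$, the exponential factors produced by separation of variables between the expansion point of (\ref{5.47}) and the end cross-sections $y_1=\pm 2\ell_*$, and the correct matching of Fourier modes on both ends with the absorbing conditions, verifying the cancellation of the $\sqrt{E_m-E_1}$-contributions. A secondary technical point is the low regularity of $u$ and $\phi$ at the corner points $(\pm\ell_*,0)$ and $(\pm\ell_*,d)$, where the boundary condition type changes and solutions behave like $r^{1/2}$; this is what motivates the class $\Xi$ and requires a brief regularisation argument near those points to legitimise the integration by parts.
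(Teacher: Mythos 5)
Your argument is essentially correct, but it takes a different route from the paper for the solvability criterion. The paper does not set up a Fredholm framework on the truncated strip at all: it first lifts the inhomogeneous conditions (\ref{5.45}) by an explicit function $u_0$ built from the data $a_m^\pm$ and the exponentials $\E^{\pm\sqrt{E_m-E_1}(y_1\mp 2\ell_*)}$, reduces to the problem with homogeneous end conditions and right-hand side $f-f_0$, extends that reduced problem by decaying mode expansions to the whole strip $\Pi$, and then reads off solvability from the pole structure of the analytically continued resolvent (Lemma~\ref{lm4.7}): a bounded solution with $\mu=0$ exists iff $\int_\Pi (f-f_0)\phi\,dy=0$, after which $\int f_0\phi$ is computed by the same kind of integration by parts you perform. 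Your proof replaces this by a direct Green's identity with $\phi$ for necessity and a Fredholm-alternative argument for the truncated problem with DtN-type end conditions for sufficiency, identifying the kernel with $\mathrm{span}\{\phi\}$ by the same extension trick plus Lemma~\ref{lm4.2} (this uniqueness step is shared with the paper, which also notes that the homogeneous problem has only $\phi$). What your route buys is self-containedness and symmetry of the two directions; what it costs is that the functional-analytic setup you invoke only in outline must actually be supplied — the weak formulation with the boundary form $\sum_m\sqrt{E_m-E_1}\,(|U_m^+|^2+|U_m^-|^2)$, its boundedness on $\H^1$ traces, the decomposition coercive-plus-compact giving index zero, and the fact that ``range equals annihilator of the kernel'' uses the real symmetry of these DtN terms — whereas the paper gets all of this for free from the already-established operators $\cc{T}_3,\cc{T}_4$ and Lemma~\ref{lm4.7}. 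Your treatment of the normalization (\ref{4.22}) (the functional is nonzero on $\phi$ because its first-mode traces at $y_1=\pm2\ell_*$ are $1$ and $\wp$) and your handling of the corner singularities are consistent with the paper; just note that the points where the boundary condition changes inside $\Pi_{2\ell_*}$ are $(\ell_*,0)$ and $(-\ell_*,d)$, not all four combinations, and that in matching your boundary pairing $\sum_m\Phi_m^\pm a_m^\pm$ with the coefficients $a_m^*$ of (\ref{5.47}) you must keep track of the same exponential factors between $y_1=\ell_*$ and $y_1=\pm2\ell_*$ that the paper's own computation of $\int f_0\phi$ involves.
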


\begin{proof}
We first construct a function satisfying (\ref{5.45}),
\begin{equation*}
u_0(y):=\pm\xi_2(y_1) \sum\limits_{m=1}^{\infty} \frac{a_m^\pm}{2\sqrt{E_m-E_1}} \chi_m(y) \E^{\pm\sqrt{E_m-E_1} (y_1\mp 2\ell_*)},\quad \pm y_1>0,
\end{equation*}
where the function $\xi_2$ introduced in (\ref{v3.10}) is taken for $\ell=\ell_*$, $a=2\ell_*$. Due to the presence of the exponent in the definition of $u_0$ and (\ref{5.43}) this function is well-defined in $\Pi_{2\ell_*}$. Namely, it is infinitely differentiable in $\Pi_{2\ell_*}$, belongs to $\H^1(\Pi_{2\ell_*})$ and satisfies boundary conditions in (\ref{5.44}) and the equation
\begin{equation}\label{4.21}
\begin{aligned}
-(\D+E_1) u_0(y)=:f_0(y)=&\mp\xi_2''(y_1) \sum\limits_{m=1}^{\infty}  \frac{\sqrt{E_m-E_1}}{2} \chi_m(y) \E^{\pm \sqrt{E_m-E_1}(y_1\mp 2\ell_*)}
\\
&-\xi_2'(y_1) \sum\limits_{m=1}^{\infty} \chi_m(y) \E^{\pm \sqrt{E_m-E_1}(y_1\mp 2\ell_*)},\quad \pm y_1>0.
\end{aligned}
\end{equation}

We construct the solution to (\ref{5.44}), (\ref{5.45}) as $u=u_0+u_1$. In view of the aforementioned properties of $u_0$ the function $u_1$ should solve the problem (\ref{5.44}), (\ref{5.45}), but with $f$ replaced by $(f-f_0)$ and $a_m^\pm$ replaced by zero. We extend the function $u_1$ as
\begin{equation*}
u_1(y):=\sum\limits_{m=1}^{\infty} \chi_m(y) \E^{\mp \sqrt{E_m-E_1}(y_1\mp 2\ell_*)} \int\limits_{0}^{d} u_1(\pm 2\ell_*,t) \chi_m(\pm 2\ell_*,t)\di t,\quad \pm y_1>2\ell_*
\end{equation*}
and see that then it is a bounded solution to the problem (\ref{v3.6}), (\ref{4.0}) with the compactly supported right hand side $(f-f_0)$ and $\mu=0$. By Lemma~\ref{lm4.7} such solution exists, if and only if the right hand side of the equation satisfies the condition,
\begin{align*}
&\int\limits_{\Pi} (f-f_0)\phi\di y=\int\limits_{\Pi_{2\ell_*}} (f-f_0)\phi\di y=0,
\\
&\int\limits_{\Pi_{2\ell_*}} f\phi\di y =\int\limits_{\Pi_{2\ell_*}} f_0\phi \di y.
\end{align*}
We substitute (\ref{4.21}) into the right hand side of the last identity and integrate by parts,
\begin{align*}
\int\limits_{\Pi_{2\ell_*}} f_0\phi\di y=&- \int\limits_{\Pi_{2\ell_*}}  \phi (\D+E_1)u_0\di y
\\
=&
\int\limits_{x_1=-2\ell_*} \left(\phi \frac{\p u_0}{\p y_1}- u_0 \frac{\p\phi}{\p y_1} \right)\di y_2-\int\limits_{x_1=2\ell_*} \left(\phi \frac{\p u_0}{\p y_1}- u_0 \frac{\p\phi}{\p y_1} \right)\di y_2
\\
=&\sum\limits_{m=1}^{\infty} (\wp a_m^- - a_m^+) a_m^*.
\end{align*}
It proves (\ref{5.46}).

As $f=0$, the problem (\ref{5.44}), (\ref{5.45}) has the only nontrivial solution, which is $\phi$. Hence, the generalized solution to (\ref{5.44}), (\ref{5.45}) is defined up to an additive term $C\phi$. It is also clear that there exists the unique solution satisfying (\ref{4.22}).
\end{proof}

We apply the proven lemma to study the solvability of the problems (\ref{5.35}), (\ref{5.40}), (\ref{5.41}). It follows from (\ref{5.8}) that
\begin{equation}\label{4.23}
\psi_0=\phi.
\end{equation}
It also fits the problem (\ref{5.35}), (\ref{5.40}), (\ref{5.41}) for $\psi_0$ and the definition of $\phi$.

Consider the problem for $\psi_1$. Due to (\ref{4.23}), (\ref{5.36}), (\ref{5.38}) it casts into the form
\begin{equation}\label{4.24}
\begin{aligned}
&-(\D+E_1)\psi_1= \cc{L}_1\phi\quad\text{in}\quad\Pi_{2\ell_*},
\\
&\psi_1=0\quad\text{on}\quad \g_{\ell_*}\cap \p\Pi_{2\ell_*},\qquad \frac{\p\psi_1}{\p y_2}=0 \quad\text{on}\quad \G_{\ell_*}\cap\p\Pi_{2\ell_*},
\\
&
\frac{\p}{\p y_1} \int\limits_0^d \psi_1(y)\chi_1(y)\di y_2= \mp \mu_1\int\limits_{0}^{d} \phi(y)\chi_1(y)\di y_2, \quad y_1=\pm 2\ell_*,
\\
& \left(\frac{\p}{\p y_1}\pm \sqrt{E_m-E_1}\right) \int\limits_{0}^{d} \psi_1(y) \chi_m(y)\di y_2=0,\quad y_1=\pm 2\ell_*, \quad m\geqslant 2.
\end{aligned}
\end{equation}
In accordance with Lemma~\ref{lm5.3} and (\ref{5.47}) the solvability condition for this problem is
\begin{equation*}
\int\limits_{\Pi_{2\ell_*}} \phi \cc{L}_1\phi \di y=2\mu_1 |a_1^*|^2.
\end{equation*}
Employing (\ref{5.38}), (\ref{2.19}) and the equation for $\phi$ in (\ref{2.18}), we obtain
\begin{align}
&\cc{L}_1\phi=-(\D+E_1) \xi_1 \frac{\p\phi}{\p y_1},\label{4.34}
\\
&\mu_1=\frac{1}{2} \int\limits_{\Pi_{2\ell_*}} \phi \cc{L}_1\phi \di y=- \frac{1}{2} \int\limits_{\Pi_{2\ell_*}} \phi (\D+E_1) \xi_1 \frac{\p \phi}{\p y_1}\di y. \nonumber
\end{align}
By analogy with  Lemma~4.2 in \cite{Bor-MSb06} one can show that at $(\ell_*,0)$ the function $\phi$ has a differentiable asymptotics
\begin{equation}\label{4.29}
\phi(y)=\a_1 r^{\frac{1}{2}}\sin\frac{\tht}{2} +\a_3 r^{\frac{3}{2}}\sin\frac{3\tht}{2}+ \Odr(r^{\frac{5}{2}}),\quad r\to+0, \quad \a_1, \a_2=\mathrm{const},
\end{equation}
where $(r,\tht)$ are polar coordinates centered at $(\ell_*,0)$. The similar behavior holds true at $(-\ell_*,d)$ but with the coefficients replaced by $\wp\a_1$ and $\wp\a_2$ due to the  parity of $\phi$ under (\ref{2.7a}).

The coefficient $\a_1$ satisfies the identity
\begin{equation*}
   \a_1^2= \frac{2}{\pi\ell_*} \int\limits_{\Pi} \left|\frac{\p \phi_n}{\p x_1}\right|^2\di x,
\end{equation*}
which we prove by the integration by parts employing (\ref{4.29}),
\begin{align*}
0=&\int\limits_{\Pi}  y_1 \phi (\D+E_1)\frac{\p\phi}{\p y_1}\di y=-\frac{\pi \ell_* \a_1^2}{2}+ 2\int\limits_{\Pi} \left|\frac{\p\phi}{\p y_1}\right|^2\di y.
\end{align*}
In the same way we calculate $\mu_1$,
\begin{equation}\label{4.25}
\mu_1=-\frac{1}{2} \int\limits_{\Pi_{2\ell_*}} \phi (\D+E_1) \xi_1\frac{\p \phi}{\p y_1}\di y
=\frac{\pi \a_1^2}{4}= \frac{1}{\ell_*} \int\limits_{\Pi} \left|\frac{\p \phi}{\p y_1}\right|^2\di y.
\end{equation}
It proves (\ref{2.24}). The proven formula for $\mu_1$ implies that $\mu_1>0$. Hence,  by (\ref{5.9}), the root $\mu_\e$ is positive and the associated the nontrivial solution to the problem (\ref{5.2}), (\ref{4.0}) decays exponentially at infinity and thus belongs to $\Ho^1(\Pi,\g_{\ell_*})$. Therefore, up to the change (\ref{5.1}) it is an eigenfunction of the operator  $\Op_{\ell_*+\e}$ for sufficiently small $\e$ associated with the eigenvalue given by (\ref{4.32})

The function $\psi_1$ is defined up to an additive term $C\phi$. We fix it uniquely assuming (\ref{4.22}) for $\psi_1$. Since the function $\xi_1$ is odd and the function $\psi_1$ is defined uniquely, it follows from the problem (4.24) that the function $\psi_1$ has the same parity under the transformation (\ref{2.7a}) as $\phi$ does. By analogy with Lemma~4.2 in \cite{Bor-MSb06} one can also show that at $(\ell_*,0)$ the function $\psi_1$ has the same behavior at $(\ell_*,0)$ and $(-\ell_*,d)$ as $\phi$. Namely,
\begin{equation}\label{4.31}
\psi_1(y)=\a_* r^{\frac{1}{2}}\sin\frac{\tht}{2}  + \Odr(r^{\frac{3}{2}}),\quad r\to+0, \quad \a_*=\mathrm{const},
\end{equation}
Although the problem (\ref{4.24}) involves the function $\xi_1$, the constant $\a_1^{(1)}$ is independent on the choice of $\xi_1$. Indeed, if we choose another cut-off function $\widetilde{\xi}_1$, it it easy to see that the corresponding solution $\widetilde{\psi}_1$ will be
\begin{equation*}
\widetilde{\psi}_1=\psi_1+(\widetilde{\xi}_1-\xi_1)\frac{\p\phi}{\p y_1}.
\end{equation*}
Since the function $(\widetilde{\xi}_1-\xi_1)$ vanishes in a neighborhood of $y_1=\ell_*$, the term $(\widetilde{\xi}_2-\xi_1)\frac{\p\phi}{\p y_1}$ can not change the constant $\a_*$ in (\ref{4.31}).

The problems for $\psi_j$, $j\geqslant 2$, are studied in the same way. Each of these functions is assumed to satisfy (\ref{4.22}). The condition (\ref{5.43}) for the right hand sides in (\ref{5.41}) follow from the belongings $\psi_j(\pm 2\ell_*,\cdot)\in L_2(0,d)$ and obvious estimates
\begin{equation*}
\left|\sqrt{E_m-E_1} b_p
\left(\frac{\mu_1}{\sqrt{E_m-E_1}},\ldots,\frac{\mu_{p-1}}{\sqrt{E_m-E_1}}\right)
\right|\leqslant C_p,
\end{equation*}
where the constants $C_p$ are independent of $m$. The solvability condition (\ref{5.46}) together with the assumptions (\ref{4.22}) determine the coefficients $\mu_j$,
\begin{align}
&
\begin{aligned}
2\mu_j&+\sum\limits_{m=2}^{\infty} a_m^*\sqrt{E_m-E_1} \sum\limits_{p=2}^{j} b_p
\left(\frac{\mu_1}{\sqrt{E_m-E_1}},\ldots, \frac{\mu_{p-1}}{\sqrt{E_m-E_1}}\right)
\\
& \int\limits_{0}^{d}\Big(\wp \psi_{j-p}(-2\ell_*,y_2)\chi_m(-2\ell_*,y_2) + \psi_{j-p}(2\ell_*,y_2) \chi_m(2\ell_*,y_2)
\Big)\di y
\\
=&- \sum\limits_{p=2}^{j} M_p \int\limits_{\Pi_{2\ell_*}} \psi_{j-p} \phi \di y+ \sum\limits_{p=1}^{j} \int\limits_{\Pi_{2\ell_*}} \phi \cc{L}_p \psi_{j-p} \di y,
\end{aligned}\nonumber
\\
&
\begin{aligned}
\mu_j=&-\frac{1}{2} \sum\limits_{p=2}^{j} M_p \int\limits_{\Pi_{2\ell_*}} \psi_{j-p} \phi \di y
+ \frac{1}{2}
\sum\limits_{p=1}^{j} \int\limits_{\Pi_{2\ell_*}} \phi \cc{L}_p \psi_{j-p} \di y
\\
&
- \frac{1}{2} \sum\limits_{m=2}^{\infty} a_m^*\sqrt{E_m-E_1} \sum\limits_{p=2}^{j} b_p
\left(\frac{\mu_1}{\sqrt{E_m-E_1}},\ldots, \frac{\mu_{p-1}}{\sqrt{E_m-E_1}}\right)
\\
& \int\limits_{0}^{d}\Big(\wp \psi_{j-p}(-2\ell_*,y_2)\chi_m(-2\ell_*,y_2) + \psi_{j-p}(2\ell_*,y_2) \chi_m(2\ell_*,y_2)
\Big)\di y.
\end{aligned}\label{4.26}
\end{align}

The functions $\psi_n$ and $\phi_n$ have certain parity under the symmetry transformation (\ref{2.7a}) and $\psi_n$ converges to $\phi_n$ as $\e\to+0$ in $\H^1(\Pi_b)$ for each $b>0$. Hence, the function $\phi_n$ has the same parity as $\psi_n$. In view of item~\ref{it3th2.2} of Theorem~\ref{th2.2} it proves the required statement on the parity of $\phi_n$.

Let us calculate $\mu_2$. We substitute (\ref{5.36}), (\ref{5.38}), (\ref{4.27}), (\ref{5.42}) into (\ref{4.26}) with $j=2$,
\begin{equation}
\mu_2=
 -\frac{\mu_1^2}{2} \int\limits_{\Pi_{2\ell_*}}  |\phi|^2\di y + \frac{1}{2} \int\limits_{\Pi_{2\ell_*}}
\phi (\cc{L}_1\psi_1 + \cc{L}_2\phi)\di y - \mu_1^2 \sum\limits_{m=2}^{\infty} \frac{|a_m^*|^2}{\sqrt{E_m-E_1}}
\label{4.28}
\end{equation}

Let us  construct the function $\psi_1$ via $\xi_1$, $\phi$ and the solution to the problem (\ref{2.30}). It follows from (\ref{4.29}) that
\begin{equation}\label{4.33}
\frac{\p\phi}{\p y_1}(y)=-\frac{1}{2} \a_1 r^{-\frac{1}{2}} \sin \frac{\tht}{2}  + \frac{3}{2} \a_3 r^{\frac{1}{2}} \sin \frac{\tht}{2} + \Odr(r^{\frac{3}{2}}), \quad x\to(\ell_*,0).
\end{equation}
In view of this asymptotics the equations (\ref{4.24}), (\ref{4.34}) yield that the function
\begin{equation}\label{4.35}
\ph=\psi_1-\xi_1\frac{\p\phi}{\p y_1}-\mu_1\ell_*\phi
\end{equation}
is exactly the solution to the problem (\ref{2.30}) with $\ell_n=\ell_*$, $\phi_n=\phi$, $\mu_1^{(n)}=\mu_1$. The solution to this problem is unique, since by Lemma~\ref{lm4.2} the corresponding homogenous problem has the trivial solution only. It follows from (\ref{4.35}) that
\begin{equation}\label{4.36}
\psi_1=\ph+\xi_1\frac{\p\phi}{\p y_1}+\mu_1\ell_*\phi.
\end{equation}
By analogy with Lemma~4.2 in \cite{Bor-MSb06} one can show that
\begin{equation}\label{4.38}
\ph(y)=\frac{\mu_1^{\frac{1}{2}}}{\pi^{\frac{1}{2}}} r^{-\frac{1}{2}} \sin \frac{\tht}{2} +\b r^{\frac{1}{2}}\sin \frac{\tht}{2} + \Odr(r^{\frac{3}{2}}),\quad x\to(\ell_*,0),\quad \b=\mathrm{const}.
\end{equation}
Together with (\ref{4.29}), (\ref{4.25}), (\ref{4.31}), (\ref{4.36}) it yields
\begin{equation}\label{4.37}
\begin{aligned}
&\psi_1(y)=\widehat{\b} r^{\frac{1}{2}} \sin \frac{\tht}{2} + \Odr(r^{\frac{1}{2}}),\quad x\to(\ell_*,0), \quad \widehat{\b}=\b + \frac{3\a_3}{2} + \frac{\pi \a_1^3\ell_*}{4}.
 \end{aligned}
\end{equation}
Employing  (\ref{5.38}), (\ref{4.27}), the equation for $\psi_1$ in (\ref{4.24}) and that for $\phi$ in (\ref{2.18}), by direct calculations we check that
\begin{align*}
&\frac{1}{2} \cc{L}_1\psi_1= -\frac{1}{2} (\D+E_1) \xi_1 \frac{\p\psi_1}{\p y_1} + \frac{1}{2} \xi_1 \frac{\p}{\p y_1} (\D+E_1)\psi_1
\\
& \hphantom{\frac{1}{2} \cc{L}_1\psi_1}=-\frac{1}{2} (\D+E_1) \xi_1 \frac{\p \psi_1}{\p y_1} + \frac{1}{2} \xi_1 \frac{\p}{\p y_1} (\D+E_1) \xi_1 \frac{\p \phi}{\p y_1},
\\
&\frac{1}{2} \cc{L}_2\phi= -\frac{1}{4} (\D+E_1) \frac{\p}{\p y_1} \xi_1^2 \frac{\p\phi}{\p y_1} - \frac{1}{2} \cc{L}_1 \xi_1 \frac{\p\phi}{\p y_1}
\\
&\hphantom{\frac{1}{2} \cc{L}_2\phi}= -\frac{1}{4} (\D+E_1) \frac{\p}{\p y_1} \xi_1^2 \frac{\p\phi}{\p y_1} + \frac{1}{2} (\D+E_1) \xi_1 \frac{\p}{\p y_1}\xi_1 \frac{\p\phi}{\p y_1} - \frac{1}{2} \xi_1 (\D+E_1) \frac{\p}{\p y_1}\xi_1 \frac{\p\phi}{\p y_1},
\\
&\frac{1}{2} (\cc{L}_1\psi_1+\cc{L}_2 \phi) = - \frac{1}{2} (\D+E_1) \xi_1 \frac{\p\psi_1}{\p y_1}
 -\frac{1}{4} (\D+E_1) \frac{\p}{\p y_1} \xi_1^2 \frac{\p\phi}{\p y_1} + \frac{1}{2}(\D+E_1) \xi_1 \frac{\p}{\p y_1} \xi_1 \frac{\p\phi}{\p y_1}
 \\
&\hphantom{\frac{1}{2} (\cc{L}_1\psi_1+\cc{L}_2 \phi)} = -\frac{1}{2}(\D+E_1) \xi_1 \frac{\p\psi_1}{\p y_1} + \frac{1}{4} (\D+E_1) \xi_1^2 \frac{\p^2\phi}{\p y_1^2}.
\end{align*}
We substitute the last expression into the second term in the right hand side of (\ref{4.28}) and integrate by parts taking into consideration (\ref{4.29}), (\ref{4.37}),
\begin{equation*}
\frac{1}{2} \int\limits_{\Pi_{2\ell_*}}
\phi (\cc{L}_1\psi_1 + \cc{L}_2\phi)\di y=\int\limits_{\Pi}
\phi \left(   \frac{1}{4} (\D+E_1) \xi_1^2 \frac{\p^2\phi}{\p y_1^2}-\frac{1}{2}(\D+E_1) \xi_1 \frac{\p\psi_1}{\p y_1}\right)\di y=\frac{\pi \a_1 \widehat{\b}}{4}.
\end{equation*}
We integrate by parts once again bearing in mind (\ref{4.25}), (\ref{4.31}), (\ref{4.37}),
\begin{align*}
&0= \int\limits_{\Pi} y_1\phi (\D+E_1)  \frac{\p\ph}{\p y_1}\di y= -\frac{3\pi \a_1^2}{8} - \frac{\pi \a_1\ell_*}{2} \left(\b+ \frac{3\a_3}{2}\right)+ 2\int\limits_{\Pi} \frac{\p\phi}{\p y_1} \frac{\p\ph}{\p y_1}\di y,
\\
&\frac{\pi \a_1}{4}\left(\b+ \frac{3\a_3}{2}\right)=-\frac{3\pi \a_1^2}{16}  + \frac{1}{\ell_*}\int\limits_{\Pi} \frac{\p\phi}{\p y_1} \frac{\p\ph}{\p y_1}\di y.
\end{align*}
The integral in the right hand side of the last identity converges due to (\ref{4.33}), (\ref{4.38}).
The last obtained identity and (\ref{4.31}), (\ref{4.33}), (\ref{4.37}) yield the formula (\ref{2.29}) for $\mu_2$.

\section*{Acknowledgments}

D.B. was partially supported by RFBR, the grant of the President of Russia for leading scientific school, by the Federal Task Program ``Scientific and pedagogical staff of innovative Russia for 2009-2013'' (contract no. 02.740.11.0612) and by the grant of FCT (ptdc/mat/101007/2008)

\end{document}